\newcommand{\QQ}{\mathbb{Q}}
\newcommand{\FF}{\mathcal{F}}
\newcommand{\MC}{\mathsf{MC}}
\newcommand{\GG}{\mathcal{G}}
\newcommand{\QC}{\mathcal{C}}
\newcommand{\rQC}{\overline{\mathcal{C}}}
\newcommand{\LL}{\mathbb{L}}
\newcommand{\map}{map}
\newcommand{\aut}{aut}
\newcommand{\Hom}{\operatorname{Hom}}
\newcommand{\Der}{\operatorname{Der}}
\newcommand{\Coder}{\operatorname{Coder}}
\newcommand{\ad}{\operatorname{ad}}
\newcommand{\tensor}{\otimes}
\newcommand{\Fib}[2]{Fib(#2,#1)}
\newcommand{\gl}{\mathfrak{g}}
\newtheorem{theorem}{Theorem}[section]
\newtheorem{proposition}[theorem]{Proposition}
\newtheorem{corollary}[theorem]{Corollary}
\newtheorem{lemma}[theorem]{Lemma}
\theoremstyle{definition}
\newtheorem{definition}[theorem]{Definition}
\newtheorem{remark}[theorem]{Remark}
\title{Rational models for automorphisms of fiber bundles}
\author{Alexander Berglund}
\address{Department of Mathematics\\
Stockholm University\\
SE-106 91 Stockholm\\
Sweden}
\email{alexb@math.su.se}
\begin{document}

\begin{abstract}
Given a fiber bundle, we construct a differential graded Lie algebra model for the classifying space of the monoid of homotopy equivalences of the base covered by a fiberwise isomorphism of the total space.
\end{abstract}

\maketitle

\section{Introduction}
Consider a fiber bundle $p\colon E\to X$ with structure group $G$ over a simply connected CW-complex $X$ and let $\aut_\circ(p)$ denote the topological monoid consisting of commutative diagrams
$$
\xymatrix{E\ar[d]^-p \ar[r]^-\varphi & E \ar[d]^-p \\ X \ar[r]^-f & X,}
$$
such that $f$ is homotopic to the identity map of $X$ and $\varphi$ is a fiberwise isomorphism.
The goal of this paper is to construct a differential graded Lie algebra model for the classifying space $B\aut_\circ(p)$ in the sense of Quillen's rational homotopy theory. We assume that $BG$ is a nilpotent space, i.e., that the group $\pi_0(G)$ is nilpotent and acts nilpotently on $\pi_k(G)$ for all $k\geq 1$.

\begin{theorem}
Let $L$ be the minimal Quillen model for $X$ and let $\Pi$ be a dg Lie algebra model for $BG$. Furthermore, let $\tau\colon \QC L \to \Pi$ be a twisting function that models the classifying map of the bundle $\nu\colon X\to BG$.
Then the classifying space $B\aut_\circ(p)$ is rationally homotopy equivalent to the geometric realization of the dg Lie algebra
\begin{equation} \label{eq:tsdp}
\Hom^\tau(\QC L,\Pi)\langle 0\rangle \rtimes_{\tau_*} \big( \Der L \ltimes_{\ad} sL\big)\langle 1\rangle.
\end{equation}
Here, $\QC L$ is the Chevalley-Eilenberg complex of $L$, we use $\langle n \rangle$ to indicate the $n$-connected cover, and the decorations $\tau$ and $\tau_*$ indicate that we take a twisted semi-direct product (see \S\ref{sec:tsdp} and \S\ref{sec:mc}).
\end{theorem}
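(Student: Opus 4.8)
The plan is to exhibit $B\aut_\circ(p)$ as the total space of a fibration over $B\aut_\circ(X)$ whose fiber is the classifying space of the gauge group, and then to identify the dg Lie model of this fibration with the twisted semidirect product \eqref{eq:tsdp}. First I would record the forgetful homomorphism of grouplike topological monoids
\[
\aut_\circ(p)\longrightarrow \aut_\circ(X),\qquad (\varphi,f)\mapsto f.
\]
Because $p$ carries the structure group $G$ and $\varphi$ is a fiberwise isomorphism, the fiber over $\mathrm{id}_X$ is the gauge group $\GG$ of $G$-equivariant automorphisms covering the identity. After replacing $\aut_\circ(p)$ by an equivalent monoid for which this map is a fibration, I obtain a fibration sequence $\GG\to\aut_\circ(p)\to\aut_\circ(X)$, and since all three monoids are grouplike, applying the classifying space functor yields
\[
B\GG\longrightarrow B\aut_\circ(p)\longrightarrow B\aut_\circ(X).
\]
It then remains to model the base and the fiber and to assemble them correctly.

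For the fiber I would invoke the identification $B\GG\simeq \map_\nu(X,BG)$ of the classifying space of the gauge group with the path component of the classifying map $\nu$ inside the mapping space. As $BG$ is nilpotent with Lie model $\Pi$ and $X$ has Chevalley--Eilenberg coalgebra $\QC L$, this component is modeled by the convolution dg Lie algebra $\Hom(\QC L,\Pi)$ with its differential twisted by the twisting function $\tau$ representing $\nu$; restricting to the component of $\nu$ and to its based structure corresponds to the $0$-connected cover, giving $\Hom^\tau(\QC L,\Pi)\langle 0\rangle$. For the base I would use the description of $B\aut_\circ(X)$ in the Quillen setting: derivations of the minimal model account for based self-equivalences, the summand $sL$ (with $H_*(sL)\cong \pi_*(X)\otimes\QQ$) accounts for the free part via the evaluation fibration $\aut_*(X)\to\aut(X)\to X$, and the $\ad$-action encodes their interaction, so that $\big(\Der L\ltimes_{\ad} sL\big)\langle 1\rangle$ is a model for $B\aut_\circ(X)$.

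To assemble these I would analyze the action of the base on the fiber. A self-equivalence $f\simeq\mathrm{id}_X$ acts on $\map(X,BG)$ by precomposition, and since $\nu\circ f\simeq\nu$ it preserves the component $\map_\nu(X,BG)$ --- this is exactly where the hypothesis that $f$ is homotopic to the identity is used. On Lie models this action is realized by letting $\Der L\ltimes_{\ad}sL$ act on the source coalgebra $\QC L$, hence on $\Hom^\tau(\QC L,\Pi)$; this is the action $\tau_*$, which also records the effect on the twisting $\tau$. The content of the theorem is then that the geometric realization of the resulting twisted semidirect product \eqref{eq:tsdp} recovers the fibration above: the split short exact sequence of dg Lie algebras realizes to a fibration sequence with fiber $B\GG$, base $B\aut_\circ(X)$, and the correct action, which must therefore be $B\aut_\circ(p)$. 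The Maurer--Cartan formalism of \S\ref{sec:mc} enters here, realizing the bundle $p$ as the Maurer--Cartan element $\tau$ and presenting \eqref{eq:tsdp} as the twist by $\tau$ of the untwisted model governing the trivial situation.

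The \emph{main obstacle} is this final comparison: matching the algebraic twisted semidirect product to the geometric fibration on the nose, rather than merely at the level of rational homotopy groups. Concretely, one must verify that the Lie-algebra action $\tau_*$ linearizes the geometric precomposition action of self-equivalences on the gauge group, and that the twisted differential on \eqref{eq:tsdp} encodes the connecting map of the fibration; equivalently, that geometric realization carries the extension to a fibration realizing $B\GG\to B\aut_\circ(p)\to B\aut_\circ(X)$ with the correct classifying data. A secondary difficulty lies in the first step: making the forgetful map a genuine fibration of grouplike monoids and identifying its fiber with the gauge group, which is delicate precisely because $f$ is only required to be homotopic to the identity rather than equal to it.
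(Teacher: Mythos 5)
Your decomposition of the problem is reasonable, and the inputs you cite for fiber and base are correct (indeed, the paper's Remark after the main theorem confirms that $\Hom^\tau(\QC L,\Pi)\langle 0\rangle$ models $B\aut_X(p)$ and that the short exact sequence of dg Lie algebras models the fibration $B\aut_X(p)\to B\aut_\circ(p)\to B\aut_\circ(X)$). But there is a genuine gap at exactly the point you flag as the ``main obstacle,'' and your proposal contains no method for closing it. Knowing a model for the fiber, a model for the base, and even that the action $\tau_*$ ``linearizes'' the geometric precomposition action on homotopy groups does \emph{not} determine the total space of the fibration: a fibration is classified by a map $B\aut_\circ(X)\to B\aut(B\mathcal{G})$ (equivalently, by the full coherent action of $\aut_\circ(X)$ on the fiber), and two fibrations with the same fiber, base, and $\pi_*$-level action can have non-equivalent total spaces. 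Identifying the connecting data of the algebraic extension \eqref{eq:tsdp} with the geometric one \emph{is} the theorem, so a proof that ends by asserting this identification ``must therefore'' hold is circular. A secondary omission: you never rationalize. The passage from $B\aut_\circ(p)$ to its rational model requires comparing bar constructions over $X$ and $X_\QQ$, which is nontrivial because $\aut_\circ(X)\to\aut_\circ(X_\QQ)$ is only a rational equivalence; the paper devotes Lemma \ref{lemma:rationalization} (a zig-zag through an auxiliary monoid $\aut_\circ(r)$) to exactly this.

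The paper's route is designed to avoid the extension-matching problem altogether. Instead of assembling a fibration, it uses May's classification theory to identify $B\aut_\circ^\FF(p)$ with a component of the one-sided bar construction $B\big(\map(X,B_\infty)_\nu,\aut_\circ(X),*\big)$ (Corollary \ref{cor:bar model}), i.e., with a homotopy quotient of a mapping space --- an object with no hidden gluing data. On the algebraic side, Theorem \ref{thm:borel} shows that the realization of a twisted semi-direct product $L\rtimes_\xi\gl$ \emph{is} a Borel construction $\MC_\bullet(L)\times_G EG$, via Quillen's principal dg coalgebra bundles and the isomorphism $\QC(L\rtimes_\xi\gl)\cong \QC(L)\tensor_{U\gl}\QC(U\gl;\gl)$; the two sides are then matched by an \emph{explicit} equivariant equivalence of mapping spaces (Theorem \ref{thm:mapping space} together with Proposition \ref{prop:equivariance}, whose proof is the pointwise identity $\epsilon(\theta\ldotp f,\xi)=\epsilon(f,\xi\ldotp\theta)$). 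This explicitness is precisely what substitutes for your unproved coherence step. Finally, the twisting by $\tau$ does not enter as a comparison of extensions but as passage to a path component: the untwisted model $\Hom(\QC L,\Pi)\rtimes\gl$ of Proposition \ref{prop:bar construction} is based at the Maurer--Cartan element $\tau$, and \cite[Corollary 1.3]{Berglund} together with the identification \eqref{eq:twist} converts the component at $\tau$ into the twisted semi-direct product \eqref{eq:tsdp}. If you want to salvage your fiber-sequence approach, you would in effect have to reprove these equivariance and Borel-construction statements, at which point you have reproduced the paper's argument.
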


In many cases of interest, there is an explicit formula for $\tau$ that yields a simplification of the model. For example, we have the following result in the case of complex vector bundles.
\begin{theorem}
Let $\xi$ be an $n$-dimensional complex vector bundle over a simply connected finite CW-complex $X$ and let $L$ be the minimal Quillen model for $X$. Then $B\aut_\circ(\xi)$ is rationally homotopy equivalent to the geometric realization of the dg Lie algebra
$$
\big(H^*(X;\QQ)\tensor \pi_*(\Omega BU(n)) \big)\langle 0 \rangle \rtimes_{\rho_*}  \big( \Der L \ltimes_{\ad} sL\big)\langle 1\rangle
$$
where $\rho = \sum_{i=1}^n c_i(\xi) \tensor \gamma_i$, 
and $\gamma_i$ is the generator for $\pi_{2i-1}(\Omega BU(n))\tensor \QQ = \pi_{2i}(BU(n))\tensor \QQ$ dual to the universal Chern class $c_i\in H^{2i}(BU(n);\QQ)$.
\end{theorem}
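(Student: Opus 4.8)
The plan is to deduce this from Theorem 1.1 by specializing the structure group to $U(n)$ and exploiting the rational formality of $BU(n)$, which both trivializes the internal Lie structure of the convolution factor and makes the twisting function completely explicit. Since an $n$-dimensional complex vector bundle has structure group $U(n)$, I would take $BG = BU(n)$. Because $H^*(BU(n);\QQ) = \QQ[c_1,\dots,c_n]$ is polynomial, $BU(n)$ has the rational homotopy type of a product $\prod_{i=1}^n K(\QQ,2i)$; in particular all rational Whitehead products vanish and its homotopy Lie algebra is abelian. I would therefore choose as the model $\Pi = \pi_*(\Omega BU(n))\tensor\QQ$, the abelian dg Lie algebra with zero differential and zero bracket, having a single generator $\gamma_i$ in degree $2i-1$ dual to $c_i$. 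Theorem 1.1 permits any Lie model for $BG$ together with any twisting function modeling $\nu$, so this choice is legitimate, and it is precisely here that all the simplifications originate.

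Next I would simplify the first factor. Since $\Pi$ is abelian, the convolution Lie algebra $\Hom(\QC L,\Pi)$ has zero bracket, so the Maurer--Cartan twist by $\tau$ does not alter its internal differential, the deformation term $[\tau,-]$ being zero. Using that $X$ is finite and $\Pi$ finite-dimensional, I would identify $\Hom^\tau(\QC L,\Pi) \cong \QC^*L \tensor \Pi$ as complexes, where $\QC^*L$ is the Chevalley--Eilenberg cochain complex and hence computes $H^*(X;\QQ)$. Passing to homology yields an identification of abelian dg Lie algebras
$$
\Hom^\tau(\QC L,\Pi) \;\simeq\; H^*(X;\QQ)\tensor \pi_*(\Omega BU(n)),
$$
with zero differential and zero bracket on the right. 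A quasi-isomorphism of complexes between abelian dg Lie algebras is automatically one of dg Lie algebras, since any chain map between objects with vanishing brackets preserves the bracket, so this step is formal once the action and twist are tracked.

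I would then identify the twisting function. As $\Pi$ is abelian, the Maurer--Cartan equation for $\tau$ reduces to $d\tau = 0$, so $\tau$ is a cycle determining a class in $H^*(X;\QQ)\tensor\pi_*(\Omega BU(n))$. Since $\tau$ models the classifying map $\nu\colon X \to BU(n)$ and the rational homotopy class of $\nu$ is detected by $\nu^*c_i = c_i(\xi)$, the component of this class along $\gamma_i$ is exactly $c_i(\xi)$, because $\gamma_i$ is dual to $c_i$. Hence $\tau$ is identified with $\rho = \sum_{i=1}^n c_i(\xi)\tensor\gamma_i$; the degrees are consistent, $c_i(\xi)$ having cohomological degree $2i$ and $\gamma_i$ homological degree $2i-1$, so that $\rho$ has the degree $-1$ required of a Maurer--Cartan element.

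Finally I would assemble these identifications into a quasi-isomorphism of the full twisted semidirect products, keeping the second factor $\Der L \ltimes_{\ad} sL$ fixed, applying the equivalence above on the first factor, carrying $\tau_*$ to $\rho_*$, and commuting with the connected-cover operations. I expect the main obstacle to be exactly this last assembly: the equivalence of first factors must be equivariant for the action of $\Der L \ltimes_{\ad} sL$, which acts through the induced coderivation action on $\QC L$, and must intertwine the two Maurer--Cartan twists $\tau_*$ and $\rho_*$ deforming the semidirect product. While the abelianness of $\Pi$ renders the first factor formal as a Lie algebra, threading a single equivalence through the module structure over the second factor and through the twist requires the naturality of the convolution and Chevalley--Eilenberg constructions, together with the invariance of the realization under changing $\tau$ within its Maurer--Cartan class; this is the step demanding the most care.
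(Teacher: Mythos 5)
Your outline matches the paper's strategy at every step but the decisive one: you specialize Theorem \ref{thm:main} to the abelian model $\Pi = \pi_*(\Omega BU(n))\tensor\QQ$, note that abelianness kills the bracket on $\Hom(\QC L,\Pi)$ so the twist does not change the differential, identify the twisting function with $\rho=\sum_i c_i(\xi)\tensor\gamma_i$ by duality, and propose to transfer the first factor to cohomology. The gap is precisely the step you flag and then defer: ``passing to homology'' does not produce what is needed. The comparison requires a single \emph{chain-level} map between the first factors that is simultaneously (i) a quasi-isomorphism, (ii) equivariant for the action of $\gl=(\Der L\ltimes_{\ad} sL)\langle 1\rangle$, which acts on $\Hom(\QC L,\Pi)$ by precomposition with coderivations of $\QC L$, and (iii) carries $\rho$ to $\tau$ \emph{on the nose}, so that it extends by the identity on $\gl$ to a morphism of twisted semidirect products. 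A generic chain homotopy equivalence onto homology satisfies neither (ii) nor (iii); the ``naturality of the convolution and Chevalley--Eilenberg constructions'' you invoke does not apply, because taking homology is not natural for coderivation actions, and without such a map the action of $\gl$ on $H^*(X;\QQ)\tensor\Pi$ appearing in the target of the theorem is not even specified by your argument. Likewise, invariance under gauge change of the Maurer--Cartan element only compares twists within a fixed dg Lie algebra; it cannot substitute for the missing equivariant map of the underlying untwisted semidirect products.

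The paper closes this gap with a canonical construction rather than an abstract transfer: the map $g_L\colon \rQC(L)\to sL/[L,L]$, the composite of the universal twisting function $\tau_L$ with the abelianization $a\colon L\to L/[L,L]$. Both $\tau_L$ and $a$ are morphisms of $\Der L$-modules, hence so is $g_L$; since $L$ is the \emph{minimal} Quillen model, $sL/[L,L]=\widetilde{H}_*(X;\QQ)$ with zero differential, and $g_L$ is a quasi-isomorphism because $L$ is cofibrant (Proposition \ref{prop:Q}, via \cite[Proposition 22.8]{FHT-RHT}). Crucially, the paper reverses the direction of your identification of the twisting function: instead of starting from an arbitrary $\tau$ modeling $\nu$ and arguing it ``is'' $\rho$, it \emph{defines} $\tau := \rho\circ g_L$ and proves (Proposition \ref{prop:tau model}) that this $\tau$ models $\nu$, using that $\psi=\rho\circ a\colon L\to\Pi$ is a dg Lie model for $\nu$ because $BU(n)$ is rationally a product of Eilenberg--MacLane spaces, so the rational homotopy class of $\nu$ is determined by the classes $c_i(\xi)=\nu^*(c_i)$. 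With this choice, $g_L^*(\rho)=\tau$ holds by definition, and $g^*\rtimes 1$ is immediately a quasi-isomorphism from $\Hom(\widetilde{H}_*(X;\QQ),\Pi)\langle 0\rangle\rtimes_{\rho_*}(\cdots)$ to $\Hom(\rQC L,\Pi)\langle 0\rangle\rtimes_{\tau_*}(\cdots)$, settling (i)--(iii) at once; the finite-dimensionality of $\widetilde{H}_*(X;\QQ)$ then gives the stated form $H^*(X;\QQ)\tensor\pi_*(\Omega BU(n))$. This construction of the equivariant comparison map, not the formal remarks about abelian Lie algebras (which are correct but peripheral), is the substance of the proof your sketch is missing.
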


Similar simplifications are possible whenever $G$ is a compact connected Lie group or, more generally, when $H^*(BG;\QQ)$ is a free graded commutative algebra, see \S\ref{sec:applications}.

\begin{remark}
The fibration sequence of dg Lie algebras
$$\Hom^\tau(\QC L,\Pi)\langle 0\rangle \to   \Hom^\tau(\QC L,\Pi)\langle 0\rangle \rtimes_{\tau_*} \big( \Der L \ltimes_{\ad} sL\big)\langle 1\rangle \to \big( \Der L \ltimes_{\ad} sL\big)\langle 1\rangle$$
associated to the twisted semi-direct product \eqref{eq:tsdp} is a model for the homotopy fibration sequence
$$B\aut_X(p) \to B\aut_\circ(p) \to B\aut_\circ(X),$$
where $\aut_\circ(X)$ is the monoid of self-maps of $X$ homotopic to the identity, and $\aut_X(p)\subseteq \aut_\circ(p)$ is the submonoid of $\aut_\circ(p)$ where $f$ is \emph{equal} to the identity on $X$.
In particular, $\Hom^\tau(\QC L,\Pi)\langle 0\rangle$ is a model for $B\aut_X(p)$. We should remark that rational models for $B\aut_X(p)$ have been studied earlier, see e.g.~\cite{Buijs-Smith}.
\end{remark}

\section{Moduli spaces of $\FF$-fibrations}
We will utilize the general framework for classification of fibrations provided by May \cite{May}. Let $(\FF,F)$ be a category of fibers in the sense of \cite[Definition 4.1]{May} and assume it satisfies the hypotheses of the classification theorem \cite[Theorem 9.2]{May}. Also recall the notions of $\FF$-spaces and $\FF$-maps from \cite{BHMP}. Let $G$ denote the group-like topological monoid $\FF(F,F)$, to be thought of as the structure group for $\FF$-fibrations.

In the special case when $\FF$ is the category of all spaces weakly equivalent to a given CW-complex $X$, with morphisms all weak equivalences between such spaces, the `structure group' $G= \aut(X)$ is the monoid of homotopy automorphisms of $X$, and an $\FF$-fibration is the same thing as a fibration with fiber weakly homotopy equivalent to $X$. We will refer to such fibrations as \emph{$X$-fibrations}.

Returning to the general situation, let $p_\infty\colon E_\infty\to B_\infty$ denote the universal $\FF$-fibration, the existence of which is ensured by May's classification theorem, and define
$$
\Fib{\FF}{X} = B\big(\map(X,B_\infty),\aut(X),*\big),
$$
where the right hand side denotes the geometric bar construction of the group-like monoid $\aut(X)$ acting on the space $\map(X,B_\infty)$ from the right by precomposition. It is a consequence of May's `Classification of $Y$-structures' \cite[\S11]{May} that $\Fib{\FF}{X}$ may be thought of as a moduli space of $\FF$-fibrations with base weakly equivalent to $X$. More precisely, we have the following:
\begin{theorem}
For a CW-complex $A$, there is a bijective correspondence between homotopy classes of maps
$$A\to \Fib{\FF}{X}$$
and equivalence classes of $X$-fibrations $p\colon E\to A$ with a $B_\infty$-structure $\theta\colon E\to B_\infty$.
\end{theorem}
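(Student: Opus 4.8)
The plan is to identify $\Fib{\FF}{X}$ as the base of a universal $B_\infty$-structured $X$-fibration and to obtain the correspondence by pullback, reducing the two halves of the bijection to May's classification theorem \cite[Theorem 9.2]{May} and his classification of $Y$-structures \cite[\S11]{May} with $Y = B_\infty$.

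First I would record the two projections out of the bar construction. Augmenting $\map(X,B_\infty)\to *$ induces a map
$$q\colon \Fib{\FF}{X}=B\big(\map(X,B_\infty),\aut(X),*\big)\longrightarrow B\big(*,\aut(X),*\big)=B\aut(X),$$
which is, after the usual thickening into a fibration, a fibration with fiber $\map(X,B_\infty)$; indeed $\Fib{\FF}{X}$ is the homotopy orbit space $\map(X,B_\infty)_{h\aut(X)}$. Since $\aut(X)=\FF(F,F)$ is group-like, May's classification theorem identifies $B\aut(X)$ as the classifying space of $X$-fibrations, so homotopy classes of maps $A\to B\aut(X)$ are in bijection with equivalence classes of $X$-fibrations over $A$.

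Next I would construct the universal structured fibration. The monoid $\aut(X)$ acts on $X$ on the left by evaluation, and pulling the universal $X$-fibration back along $q$ gives the $X$-fibration $\tilde p$ over $\Fib{\FF}{X}$ with total space $B\big(\map(X,B_\infty),\aut(X),X\big)$. The evaluation $\map(X,B_\infty)\times X\to B_\infty$, $(g,x)\mapsto g(x)$, satisfies $(g\phi)(x)=g(\phi x)$ for $\phi\in\aut(X)$, hence coequalizes the two $\aut(X)$-actions and descends to a map
$$\theta_\infty\colon B\big(\map(X,B_\infty),\aut(X),X\big)\longrightarrow B_\infty,$$
which is a $B_\infty$-structure on $\tilde p$. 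The pair $(\tilde p,\theta_\infty)$ is my candidate universal example.

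Finally I would define the correspondence by sending $f\colon A\to\Fib{\FF}{X}$ to the pullback $f^*(\tilde p,\theta_\infty)$, an $X$-fibration over $A$ together with a $B_\infty$-structure. That this assignment is well-defined on homotopy classes, surjective, and injective is precisely the content of May's classification of $Y$-structures applied to $Y=B_\infty$: the homotopy orbit space $\map(X,Y)_{h\aut(X)}$ classifies $X$-fibrations equipped with a fiberwise map to $Y$. I expect the main obstacle to be bookkeeping rather than conceptual, namely matching May's notion of a $Y$-structure with the data of a single map $\theta\colon E\to B_\infty$ out of the total space (via fiberwise restriction and evaluation), and checking that the equivalence relation on structured fibrations used by May coincides with the one implicit in the statement, so that the bijection descends correctly to equivalence classes on both sides.
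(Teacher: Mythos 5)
Your proposal is correct and takes essentially the same route as the paper: the paper's entire proof is the single citation ``This follows readily from \cite[Theorem 11.1]{May}'', i.e., May's classification of $Y$-structures with $Y = B_\infty$, which is precisely the result you reduce to. Your explicit construction of the universal pair $(\tilde p,\theta_\infty)$ over $\Fib{\FF}{X}$ and the identification of the bar construction as the homotopy orbit space $\map(X,B_\infty)_{h\aut(X)}$ is a faithful unpacking of what that citation encodes, including the bookkeeping you flag about matching May's notion of $Y$-structure with a map $\theta\colon E\to B_\infty$.
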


\begin{proof}
This follows readily from \cite[Theorem 11.1]{May}.
\end{proof}

In particular, since an $X$-fibration over a point is just a space weakly equivalent to $X$, we see that the set of path components,
$$\pi_0\Fib{\FF}{X},$$
is in bijective correspondence with the set of equivalence classes of $\FF$-fibrations with base weakly homotopy equivalent to $X$.

\begin{definition}
Given an $\FF$-fibration $p\colon E\to B$, let $\aut^\FF(p)$ denote the space of $\FF$-self equivalences of $p$, i.e., the topological monoid 
consisting of commutative diagrams
$$
\xymatrix{E\ar[d]^-p \ar[r]^-\varphi & E \ar[d]^-p \\ X \ar[r]^-f & X,}
$$
such that $f$ is a weak homotopy equivalence and $\varphi$ is a fiberwise $\FF$-map, topologized as a subset of $\map(B,B)\times \map(E,E)$. Let $\aut_\circ^\FF(p) \subseteq \aut^\FF(p)$ denote the submonoid consisting of those pairs $(f,\varphi)$ such that $f$ is homotopic to the identity map on $X$. If $D\subseteq C\subseteq X$ are subsets, then let $\aut_{C}^\FF(p)$ denote the submonoid consisting of pairs as above such that $f$ restricts to the identity map on $C$, and write $\aut_{C}^{D,\FF}(p)$, or simply $\aut_C^D(p)$, for the submonoid of $\aut_C^\FF(p)$ where $\varphi$ restricts to the identity isomorphism on the fibers over points in $D$. Finally, let $\aut_{C,\circ}^D(p)$ denote $\aut_{C}^D(p) \cap \aut_{\circ}^\FF(p)$.
\end{definition}

By using standard properties of the geometric bar construction, we can obtain information about the homotopy types of the components of $\Fib{\FF}{X}$.
\begin{theorem}
\begin{enumerate}
\item There is a bijection
$$\pi_0\Fib{\FF}{X} \cong [X,B_\infty]/\pi_0\aut(X).$$

\item There is a weak equivalence of spaces over $B\aut(X)$,
$$\Fib{\FF}{X} \sim \coprod_{[p]} B\aut^\FF(p),$$
where the union is over all equivalence classes of $\FF$-fibrations $p\colon E\to B$, with $B$ weakly equivalent to $X$.
\end{enumerate}
\end{theorem}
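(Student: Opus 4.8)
The plan is to read off both statements from standard properties of the two-sided geometric bar construction, applied to the group-like monoid $G=\aut(X)$ acting on the right of $Y=\map(X,B_\infty)$ by precomposition, so that $\Fib{\FF}{X}=B(Y,G,*)$ is the homotopy orbit space $Y_{hG}$. First I would recall the quasifibration
$$ Y \longrightarrow B(Y,G,*) \longrightarrow B(*,G,*) = B\aut(X), $$
which is the projection onto the bar coordinate in $G$. Since $\aut(X)$ is group-like, $B\aut(X)$ is connected with $\pi_1 B\aut(X)=\pi_0\aut(X)$, and the fiber $Y$ has $\pi_0 Y=[X,B_\infty]$. The monodromy action of $\pi_1$ of the base on $\pi_0$ of the fiber is precisely the precomposition action of $\pi_0\aut(X)$ on $[X,B_\infty]$, so the exact sequence of the quasifibration identifies $\pi_0 B(Y,G,*)$ with the orbit set $[X,B_\infty]/\pi_0\aut(X)$, which is part (1).

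For part (2), I would use that the bar construction splits as a disjoint union indexed by the path components just computed: writing $Gy\subseteq Y$ for the union of the path components of $Y$ lying in the $\pi_0\aut(X)$-orbit of a chosen $y$, one has
$$ B(Y,G,*) \;=\; \coprod_{[y]} B(Gy,G,*), $$
the union running over $[y]\in[X,B_\infty]/\pi_0\aut(X)$. By the preceding $\pi_0$-bijection these orbits correspond exactly to the equivalence classes $[p]$ of $\FF$-fibrations with base weakly equivalent to $X$, a representative being $p=y^*p_\infty$. Each projection $B(Gy,G,*)\to B\aut(X)$ is a restriction of the quasifibration above, so the splitting is automatically one of spaces over $B\aut(X)$; it remains only to identify each summand.

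Here I would invoke the orbit–stabilizer property of the bar construction: for the orbit map $o_y\colon G\to Y$, $f\mapsto y\circ f$, whose image lies in $Gy$, there is a natural equivalence $B(Gy,G,*)\simeq BF_y$ over $B\aut(X)$, where $F_y$ denotes the homotopy fiber of $o_y$ over $y$, a group-like monoid under composition. A point of $F_y$ is a pair $(f,H)$ consisting of $f\in\aut(X)$ together with a path $H$ from $y\circ f$ to $y$ in $\map(X,B_\infty)$. By May's classification theorem such a homotopy corresponds to a fiberwise $\FF$-equivalence of the classified fibrations, i.e.\ to an equivalence $f^*p\xrightarrow{\sim}p$ over $X$, equivalently to a commutative square as in the definition of $\aut^\FF(p)$ whose top map covers $f$. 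This sets up an equivalence of group-like monoids $F_y\simeq\aut^\FF(p)$ under which the forgetful map $(f,H)\mapsto f$ becomes $(f,\varphi)\mapsto f$, compatibly with the projections to $\aut(X)$; applying $B$ then yields $B(Gy,G,*)\simeq B\aut^\FF(p)$ over $B\aut(X)$.

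The main obstacle I expect is exactly this last identification: turning the homotopy-theoretic fiber $F_y$ into the \emph{point-set} monoid $\aut^\FF(p)$ in a way natural enough to commute with the bar constructions and with the maps to $B\aut(X)$. This requires a careful application of May's covering-homotopy and classification results to promote homotopies $y\circ f\simeq y$ to honest fiberwise self-equivalences, together with attention to the quasifibration-versus-fibration bookkeeping, since the relevant mapping spaces and monoids must be fibrant and suitably well-pointed for the standard bar-construction properties to be available.
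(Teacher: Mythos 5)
Your part (1) is exactly the paper's argument: the long exact sequence of May's quasifibration $\aut(X)\to\map(X,B_\infty)\to B(\map(X,B_\infty),\aut(X),*)\to B\aut(X)$. For part (2), however, your orbit--stabilizer route has a genuine gap at precisely the step you flag at the end, and it is not mere bookkeeping. The decomposition $B(Y,G,*)=\coprod_{[y]}B(Gy,G,*)$ and the equivalence $B(Gy,G,*)\simeq BF_y$ for the homotopy fiber $F_y$ of the orbit map are fine in principle (Moore paths make $F_y$ a strict group-like monoid, and one compares the two fibration sequences over $B\aut(X)$). But the identification $F_y\simeq\aut^\FF(p)$ \emph{as group-like monoids over} $\aut(X)$, \emph{compatibly with the bar constructions}, is never constructed. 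The recipe ``a homotopy $H\colon \nu\circ f\simeq\nu$ corresponds by the classification theorem to a fiberwise $\FF$-equivalence $f^*p\to p$'' does not define a map in either direction: the covering homotopy property produces a lift only after a choice, the assignment is not continuous in $(f,H)$ as stated, and it is not multiplicative --- composition of pairs $(f,H)$ agrees with composition of fiberwise equivalences only up to higher homotopies. Conversely, passing from $(f,\varphi)\in\aut^\FF(p)$ to a path $\nu\circ f\simeq\nu$ again requires a non-canonical choice. So the crucial equivalence is asserted, not proved, and there is no direct point-set map available to prove it.

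The paper's proof is engineered exactly to avoid this rigidification problem: it interposes the space $\map^\FF(p,p_\infty)$ of $\FF$-maps to the universal fibration, which is weakly contractible by \cite[Proposition 3.1]{BHMP} and carries a \emph{strict} right $\aut^\FF(p)$-action. Comparing the two quasifibration rows
$$\aut^\FF(p)\to\map^\FF(p,p_\infty)\to B\big(\map^\FF(p,p_\infty),\aut^\FF(p),*\big)\to B\aut^\FF(p)$$
and
$$\aut(X)\to\map(X,B_\infty)\to B\big(\map(X,B_\infty),\aut(X),*\big)\to B\aut(X),$$
the homotopy cartesianness of the left-hand square is precisely the rigorous form of your desired statement $\aut^\FF(p)\simeq F_\nu$, and contractibility of $\map^\FF(p,p_\infty)$ then identifies $B\aut^\FF(p)$ with the component of $\nu$ in $B(\map(X,B_\infty),\aut(X),*)$ over $B\aut(X)$, with no need to turn homotopies into fiberwise equivalences. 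To complete your proposal you would either have to reprove that homotopy cartesian square --- at which point you have reproduced the paper's argument --- or find some other strict model for $F_\nu$; as written, the missing monoid-level identification is the actual content of the theorem, so the proposal is an outline rather than a proof.
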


\begin{proof}
As follows from \cite[Proposition 7.9]{May}, there is a homotopy fiber sequence
\begin{equation*} \label{eq:fibration}
\aut(X) \to \map(X,B_\infty) \to \Fib{\FF}{X} \to B\aut(X).
\end{equation*}
The first statement follows by looking at the induced long exact sequence of homotopy groups.

The space of $\FF$-maps $\map^\FF(p,p_\infty)$ is weakly contractible for every $\FF$-fibration $p\colon E\to X$ by \cite[Proposition 3.1]{BHMP}. Consider the diagram
$$
\xymatrix{\aut^\FF(p) \ar[d] \ar[r] & \map^\FF(p,p_\infty) \ar[d] \ar[r] & B\big(\map^\FF(p,p_\infty),\aut^\FF(p),*\big) \ar[d] \ar[r] & B\aut^\FF(p) \ar[d] \\
\aut(X) \ar[r] & \map(X,B_\infty) \ar[r] & B\big(\map(X,B_\infty),\aut(X),*\big) \ar[r] & B\aut(X).}
$$
According to \cite[Proposition 7.9]{May} the rows are quasifibration sequences. The leftmost square is homotopy cartesian. It follows that the third vertical map from the left induces a weak equivalence between the connected components containing $\nu$.
Since $\map^\FF(p,p_\infty)$ is weakly contractible, the rightmost map in the top row is a weak homotopy equivalence. The rightmost square yields a zig-zag of weak homotopy equivalences showing $B\aut^\FF(p) \sim B\big(\map(X,B_\infty),\aut(X),*\big)_\nu$ as spaces over $B\aut(X)$, where $\nu$ indicates the component containing (the class of) $\nu$.
\end{proof}

\begin{corollary} \label{cor:bar model}
There are weak homotopy equivalences
$$B\aut^\FF(p) \sim B\big(\map(X,B_\infty)_\nu,\aut(X)_{[\nu]},*\big),$$
$$B\aut_\circ^\FF(p) \sim B\big(\map(X,B_\infty)_\nu,\aut_\circ(X),*\big),$$
where $\aut(X)_{[\nu]}$ denotes the monoid of homotopy equivalences $\varphi\colon X\to X$ such that $\nu \circ\varphi \simeq \nu$ and $\map(X,B_\infty)_\nu$ denotes the component of $\nu$.
\end{corollary}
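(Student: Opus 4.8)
The plan is to derive the corollary from the preceding theorem together with a single general fact about the path components of the geometric bar construction of a group-like monoid action. The last line of the theorem's proof already provides the weak equivalence $B\aut^\FF(p) \sim B(\map(X,B_\infty),\aut(X),*)_\nu$ over $B\aut(X)$, and the identical quasifibration argument — run mutatis mutandis with $\aut_\circ^\FF(p)$ and $\aut_\circ(X)$ in place of $\aut^\FF(p)$ and $\aut(X)$, again invoking \cite[Proposition 7.9]{May} and the contractibility of $\map^\FF(p,p_\infty)$ from \cite[Proposition 3.1]{BHMP} — yields $B\aut_\circ^\FF(p) \sim B(\map(X,B_\infty),\aut_\circ(X),*)_\nu$. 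So in both cases it remains only to identify the path component containing $\nu$ inside a bar construction $B(M,G,*)$ with a smaller bar construction built from the component $M_{[\nu]}=\map(X,B_\infty)_\nu$ and a suitable submonoid of $G$.

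To that end I would isolate the following lemma: if a group-like topological monoid $G$ acts on $M$ on the right, $m\in M$, and $G_{[m]}\subseteq G$ denotes the submonoid of those $g$ with $mg$ in the component $M_{[m]}$ of $m$, then the inclusions induce a weak equivalence $B(M_{[m]},G_{[m]},*)\xrightarrow{\sim} B(M,G,*)_{[m]}$ onto the component of $m$. (Here $G_{[m]}$ is a union of path components of $G$ containing the identity, so it is genuinely a submonoid acting on $M_{[m]}$.) Granting this, the first equivalence of the corollary is the case $G=\aut(X)$, $M=\map(X,B_\infty)$, $m=\nu$, in which $G_{[\nu]}$ is exactly $\aut(X)_{[\nu]}=\{\varphi:\nu\circ\varphi\simeq\nu\}$; the second equivalence is the case $G=\aut_\circ(X)$, which is connected, so that $G_{[\nu]}=G$ and the lemma degenerates to the observation that a connected monoid preserves each component of $M$, whence $B(M,G,*)_\nu = B(M_\nu,G,*)$.

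For the lemma itself, the key input is again the quasifibration sequence $M\to B(M,G,*)\to BG$ of \cite[Proposition 7.9]{May} alongside its counterpart $M_{[m]}\to B(M_{[m]},G_{[m]},*)\to BG_{[m]}$ and the map between them. Since $G_{[m]}$ is a union of components of $G$ containing the identity, the inclusion $G_{[m]}\hookrightarrow G$ is an isomorphism on $\pi_j$ for $j\geq 1$ and realizes the inclusion $\operatorname{Stab}_{\pi_0 G}([m])\hookrightarrow \pi_0 G$ on $\pi_0$; hence $BG_{[m]}\to BG$ is an isomorphism on $\pi_{\geq 2}$ and is injective with image the stabilizer on $\pi_1$. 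Restricting the lower fibration to the component of $m$ replaces the fiber $M$ by $M_O$, the union of the components in the $\pi_0 G$-orbit $O$ of $[m]$, and I would compare the two long exact homotopy sequences. Both total spaces are connected and the comparison is immediate on $\pi_{\geq 2}$; the delicate point, which I expect to be the main obstacle, is the portion of the sequences around $\pi_1$, where one must check that the extra components of the fiber ($\pi_0 M_O=O$) are compensated exactly by the passage from $\pi_1 BG=\pi_0 G$ to $\pi_1 BG_{[m]}=\operatorname{Stab}([m])$ through the orbit-stabilizer bijection $O\cong \pi_0 G/\operatorname{Stab}([m])$. This is the homotopical analogue of Shapiro's lemma — homotopy orbits of an induced action reduce to homotopy orbits under the stabilizer — and precisely because $G$ is only group-like rather than an honest group it is cleanest to argue through the fibration over $BG$ rather than through a point-set quotient; a five-lemma style diagram chase on the two exact sequences then settles the isomorphism on $\pi_1$ and completes the proof of the weak equivalence.
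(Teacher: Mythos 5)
Your proposal is correct and takes essentially the same route as the paper: the paper likewise deduces both equivalences from the preceding theorem, identifying the component $B\big(\map(X,B_\infty),\aut(X),*\big)_\nu$ with $B\big(\map(X,B_\infty)_\nu,\aut(X)_{[\nu]},*\big)$ (a step it dismisses as ``easily seen'') and handling the $\aut_\circ$ case ``similarly''. Your general component-identification lemma, proved by comparing the long exact sequences of May's quasifibrations over $BG_{[\nu]}$ and $BG$ via the orbit--stabilizer correspondence, is a correct fleshing-out of exactly that omitted step.
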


\begin{proof}
We have just seen that $B\aut^\FF(p)\sim B\big(\map(X,B_\infty),\aut(X),*\big)_\nu$. The latter is easily seen to be weakly equivalent to $B\big(\map(X,B_\infty)_\nu,\aut(X)_{[\nu]},*\big)$.
The second claim is proved similarly.
\end{proof}

\section{Rational models}
This section contains the proof of the main theorem. We begin by examining the effect of $\QQ$-localization on the geometric bar construction.
Then we will construct a dg Lie model for the $\QQ$-localized bar construction, by combining Schlessinger-Stasheff's \cite{SS} and Tanr\'e's \cite{Tanre} theory of fibrations of dg Lie algebras with Quillen's theory of principal dg coalgebra bundles \cite{Quillen}.

\subsection{Rationalization}
\begin{lemma} \label{lemma:rationalization}
Let $X$ be a connected nilpotent finite CW-complex, let $Z$ be a connected nilpotent space, and fix a map $\nu\colon X\to Z$.
Then $B\big(\map(X,Z)_\nu,\aut_\circ(X),*\big)$ is rationally homotopy equivalent to
$$B\big(\map(X_\QQ,Z_\QQ)_{\nu_\QQ},\aut_{\circ}(X_\QQ),*\big).$$
\end{lemma}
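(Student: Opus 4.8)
The plan is to realize both bar constructions as total spaces of nilpotent fibrations and then appeal to the fact that rationalization preserves such fibrations. Write $M=\map(X,Z)_\nu$ and $G=\aut_\circ(X)$, and set $M'=\map(X_\QQ,Z_\QQ)_{\nu_\QQ}$ and $G'=\aut_\circ(X_\QQ)$. Collapsing the $M$-coordinate to a point yields, exactly as in the quasifibration sequences of \cite[Proposition 7.9]{May}, a fibration sequence
$$M\to B(M,G,*)\to BG,$$
and likewise $M'\to B(M',G',*)\to BG'$. Since the group-like monoids $G$ and $G'$ are path-connected, the bases $BG$ and $BG'$ are simply connected.

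The first step I would carry out is to produce a map between these two fibration sequences. Fixing a functorial rationalization with natural unit $\ell$, the assignment $\varphi\mapsto\varphi_\QQ$ defines a continuous homomorphism of topological monoids $G\to G'$, and $g\mapsto g_\QQ$ defines a continuous map $M\to M'$; these intertwine the precomposition actions because $(g\circ\varphi)_\QQ=g_\QQ\circ\varphi_\QQ$ by functoriality. Applying the bar construction, I obtain a map $B(M,G,*)\to B(M',G',*)$ lying over $BG\to BG'$ and restricting to $M\to M'$ on fibers.

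The second step is to identify the fiber and base maps as rationalizations. Because $X$ is a finite CW-complex and $Z$ is nilpotent, the component $M$ is a nilpotent space and the natural map $M\to M'$ is its rationalization; this is the localization theory of mapping spaces of Hilton, Mislin and Roitberg, where the finiteness of $X$ is precisely what permits replacing $\map(X,Z_\QQ)$ by $\map(X_\QQ,Z_\QQ)$. Because $X$ is finite and nilpotent, $G=\aut_\circ(X)$ is a nilpotent group-like monoid and the map $G\to G'=\aut_\circ(X_\QQ)$ is a rationalization, so that $BG\to BG'$ is the rationalization of the simply connected nilpotent space $BG$; this is part of the rational homotopy theory of classifying spaces of homotopy automorphisms (cf.~\cite{Tanre}). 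This last fact, that self-equivalences localize correctly, is the substantive input and constitutes the main obstacle; once it is granted, the remainder is formal.

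Finally, both fibration sequences are nilpotent fibrations, since their bases are simply connected and their fibers are nilpotent; hence their total spaces are nilpotent, and $B(M',G',*)$ is rational because both its fiber $M'$ and its base $BG'$ are. Comparing the two long exact sequences of homotopy groups, tensoring the source sequence with $\QQ$, and applying the five lemma shows that $B(M,G,*)\to B(M',G',*)$ induces an isomorphism on rational homotopy groups onto a rational target; equivalently, one invokes directly the theorem that a map of nilpotent fibrations which rationalizes fiber and base rationalizes the total space. By uniqueness of rationalization for nilpotent spaces, this exhibits $B(M',G',*)$ as the rationalization of $B(M,G,*)$, which is the asserted rational homotopy equivalence.
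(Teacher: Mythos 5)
Your argument is correct in substance, but it is organized differently from the paper at the key step. You compare the two bar constructions by a single direct map, induced by $\varphi\mapsto\varphi_\QQ$ on the monoids and $g\mapsto g_\QQ$ on the mapping spaces, and then conclude by rationalizing fiberwise along the quasifibration $M\to B(M,G,*)\to BG$ via the five lemma. The paper never constructs this direct map: it instead forms the pullback monoid $\aut_\circ(r)$ of pairs $(f,g)$ with $r\circ f=g\circ r$, whose two projections to $\aut_\circ(X)$ and $\aut_\circ(X_\QQ)$ are respectively a weak equivalence and a rational homotopy equivalence (the latter by \cite[Theorem II.3.11]{HMR}, using that $r^*$ is a fibration and weak equivalence because $r$ is a cofibration), and then compares the bar constructions through the zig-zag of $\aut_\circ(r)$-equivariant rational equivalences
$$\map(X,Z)_\nu \xrightarrow{q_*} \map(X,Z_\QQ)_{q\nu} \xleftarrow{r^*} \map(X_\QQ,Z_\QQ)_{\nu_\QQ}.$$
The pullback is designed precisely to sidestep what your route must supply: for $\varphi\mapsto\varphi_\QQ$ to be a \emph{continuous} homomorphism of topological monoids and $g\mapsto g_\QQ$ a strictly equivariant continuous map, functoriality of the rationalization up to homotopy is not enough; you need a continuous (simplicially enriched) localization functor, which the Bousfield--Kan $\QQ$-completion does provide, so this is a caveat rather than a gap --- but you should state it, since ``fixing a functorial rationalization'' alone does not yield the map $B(M,G,*)\to B(M',G',*)$. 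Granting enrichment, your identifications of the fiber and base maps as rationalizations reduce to the same input as the paper's: naturality gives $\varphi_\QQ\circ r=r\circ\varphi$ and $g_\QQ\circ r=q\circ g$, so composing your direct maps with the weak equivalence $r^*$ identifies them with $r_*$ and $q_*$; in particular the fact you flag as the main obstacle and attribute to \cite{Tanre} is exactly \cite[Theorem II.3.11]{HMR} combined with this trick, which is how the paper \emph{proves} it rather than quoting it. Your closing step (nilpotent fibers, simply connected bases, five lemma on rationalized long exact sequences) is the justification the paper leaves implicit in labeling the arrows of its final diagram $\sim_\QQ$, so the two proofs share their analytic core: yours buys a single comparison map at the price of an enriched localization functor, while the paper's zig-zag through $\aut_\circ(r)$ works with any functorial localization.
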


\begin{proof}
By using a functorial $\QQ$-localization for nilpotent spaces, e.g., the Bousfield-Kan $\QQ$-completion, we can construct a commutative diagram
$$\xymatrix{X \ar[r]^-\nu \ar[d]^-{r} & Z \ar[d]^-{q} \\ X_\QQ \ar[r]^-{\nu_\QQ} & Z_\QQ,}$$
where the vertical maps are $\QQ$-localizations. We may also assume that $r$ is a cofibration. Define the monoid $\aut_\circ(r)$ as the pullback
$$
\xymatrix{\aut_\circ(r) \ar@{->>}[r]^-{\sim} \ar[d]^-{\sim_\QQ} & \aut_\circ(X) \ar[d]_-{\sim_\QQ}^-{r_*} \\ \aut_\circ(X_\QQ) \ar@{->>}[r]_-{r^*}^-{\sim} & \map(X,X_\QQ)_{r}.}
$$
Thus, the monoid $\aut_\circ(r)$ consists of pairs $(f,g)$ where $f$ and $g$ are self-maps homotopic to the identity of $X$ and $X_\QQ$, respectively, such that $r\circ f = g\circ r$. Since $r$ is a cofibration, the map $r^*$ is a fibration. It is also a weak equivalence by standard properties of $\QQ$-localization. The map $r_*$ is a rational homotopy equivalence by \cite[Theorem II.3.11]{HMR}. It follows that the projections from $\aut_\circ(r)$ to $\aut_\circ(X)$ and $\aut_\circ(X_\QQ)$ are a weak equivalence and a rational homotopy equivalence, respectively.

There are right actions of the monoid $\aut_\circ(r)$ on $\map(X,Z)$ and $\map(X_\QQ,Z_\QQ)$ through the projections to $\aut_\circ(X)$ and $\aut_\circ(X_\QQ)$, respectively. We get a zig-zag of rational homotopy equivalences of right $\aut_\circ(r)$-spaces
$$\map(X,Z)_\nu \xrightarrow{q_*} \map(X,Z_\QQ)_{q\nu} \xleftarrow{r^*} \map(X_\QQ,Z_\QQ)_{\nu_{\QQ}}.$$
This accounts for the top horizontal zig-zag in the following diagram, where we write $\bullet$ instead of $B\big(\map(X,Z_\QQ)_{q\nu},\aut_\circ(r),*\big)$ to save space,
$$
\xymatrix{B\big(\map(X,Z)_\nu,\aut_\circ(r),*\big)  \ar[d]^-\sim \ar[r]^-{\sim_\QQ} & {\bullet} & \ar[l]_-{\sim_\QQ} B\big(\map(X_\QQ,Z_\QQ)_{\nu_\QQ},\aut_\circ(r),*\big) \ar[d]^-{\sim_\QQ} \\
B\big(\map(X,Z)_\nu,\aut_\circ(X),*\big) && B\big(\map(X_\QQ,Z_\QQ)_{\nu_\QQ},\aut_\circ(X_\QQ),*\big).}$$
\end{proof}

\subsection{Geometric realization of dg Lie algebras}
Let $\gl$ be a dg Lie algebra over $\QQ$, possibly unbounded as a chain complex. For $n\geq 0$, the \emph{$n$-connected cover} is the dg Lie subalgebra $\gl\langle n\rangle \subseteq \gl$ defined by
$$
\gl\langle n \rangle_i = \left\{ \begin{array}{ll} \gl_i, & i>n, \\ \ker(\gl_n \xrightarrow{d} \gl_{n-1}), & i = n, \\ 0, & i<n. \end{array} \right.
$$
We call $\gl$ \emph{connected} if $\gl = \gl\langle 0\rangle$ and \emph{simply connected} if $\gl = \gl\langle 1\rangle$.

The lower central series of $\gl$ is the descending filtration
$$\gl = \Gamma^1\gl \supseteq \Gamma^2 \gl \supseteq \cdots$$
characterized by $\Gamma^1 \gl = \gl$ and $[\Gamma^k \gl,\gl] = \Gamma^{k+1} \gl$.
We call $\gl$ \emph{nilpotent} if the lower central series terminates \emph{degree-wise}, meaning that for every $n$, there is a $k$ such that $(\Gamma^k\gl)_n = 0$. This definition of nilpotence mirrors the notion of nilpotence for topological spaces. Indeed, a connected dg Lie algebra $\gl$ is nilpotent if and only if the Lie algebra $\gl_0$ is nilpotent and the action of $\gl_0$ on $\gl_n$ is nilpotent for all $n$. And, clearly, every simply connected dg Lie algebra is nilpotent.

If $\gl$ is an ordinary nilpotent Lie algebra, then $\exp(\gl)$ denotes the nilpotent group whose underlying set is $\gl$ and where the group operation is given by the Campbell-Baker-Hausdorff formula, see e.g.~\cite{Quillen}. The following generalizes this to dg Lie algebras. Let $\gl$ be a connected nilpotent dg Lie algebra. If $\Omega$ is a commutative cochain algebra, then the chain complex $\gl\tensor \Omega$ becomes a dg Lie algebra with
$$[x\tensor \alpha,y\tensor \beta] = (-1)^{|\alpha||y|}[x,y]\tensor \alpha\beta$$
for $x,y\in \gl$ and $\alpha,\beta\in \Omega$. If $\Omega^k = 0$ unless $0\leq k\leq n$ for some $n$, then the degree $0$ component of $\gl\tensor \Omega$ decomposes as
$$(\gl \tensor \Omega)_0 = (\gl_0 \tensor \Omega^0) \oplus (\gl_1 \tensor \Omega^1) \oplus \cdots \oplus (\gl_n \tensor \Omega^n).$$
From the fact that $[\gl_i\tensor \Omega^i,\gl_j\tensor \Omega^j] \subseteq \gl_{i+j}\tensor \Omega^{i+j}$ and that $\gl_0$ acts nilpotently on $\gl_k$ for all $k$, one sees that $(\gl \tensor \Omega)_0$ is a nilpotent Lie algebra. Hence, so is the Lie subalgebra of zero-cycles $Z_0(\gl\tensor \Omega)$.

Let $\Omega_\bullet$ be the simplicial commutative differential graded algebra where $\Omega_n$ is the Sullivan-de Rham algebra of polynomial differential forms on the $n$-simplex, see \cite{FHT-RHT}. Since $\Omega_n^k = 0$ unless $0\leq k\leq n$, the above construction may be applied levelwise to the simplicial dg Lie algebra $\gl\tensor \Omega_\bullet$.

\begin{definition}
Let $\gl$ be a connected nilpotent dg Lie algebra. We define $\exp_\bullet(\gl)$ to be the simplicial nilpotent group
$$\exp_\bullet(\gl) = \exp Z_0(\gl \tensor \Omega_\bullet).$$
\end{definition}

Next, we recall the definition of the nerve $\MC_\bullet(\gl)$ of a dg Lie algebra $\gl$. As we will see below, the nerve $\MC_\bullet(\gl)$ is a delooping of the simplicial group $\exp_\bullet(\gl)$.

\begin{definition}
A \emph{Maurer-Cartan element} in $\gl$ is an element $\tau$ of degree $-1$ such that
$$d(\tau) + \frac{1}{2}[\tau,\tau] = 0.$$
The set of Maurer-Cartan elements is denoted $\MC(\gl)$. The \emph{nerve} of $\gl$ is the simplicial set
$$\MC_\bullet(\gl) = \MC(\gl \tensor \Omega_\bullet).$$
Define the \emph{geometric realization} of a dg Lie algebra to be the geometric realization of its nerve,
$$|\gl| = |\MC_\bullet(\gl)|.$$
\end{definition}

\subsection{Geometric realization of dg coalgebras}
Let $\Omega$ be a commutative cochain algebra over $\QQ$. A dg coalgebra over $\Omega$ is a coalgebra in the symmetric monoidal category of $\Omega$-modules, i.e., a dg $\Omega$-module $C$ together with a coproduct and a counit,
$$\Delta\colon C\to C\tensor_\Omega C,\quad \epsilon \colon C\to \Omega,$$
such that the appropriate diagrams commute. We let $dgc(\Omega)$ denote the category of dg coalgebras over $\Omega$.
If $C$ is a dg coalgebra over $\Omega$, we let
$$\GG(C)$$
denote the set of group-like elements, i.e., elements $\xi \in C$ of degree $0$ such that
$$\Delta(\xi) = \xi \tensor \xi,\quad d(\xi) = 0,\quad \epsilon(\xi) = 1.$$
Given a dg coalgebra $C$ over $\QQ$, the free $\Omega$-module $C\tensor \Omega$ is a dg coalgebra over $\Omega$. Clearly,
$$\Omega \mapsto \GG(C\tensor \Omega)$$
defines a functor from commutative cochain algebras to sets.

\begin{definition}
Let $C$ be a dg coalgebra. We defined the \emph{spatial realization} of $C$ to be the simplicial set
$$\langle C \rangle = \GG(C\tensor \Omega_\bullet).$$
\end{definition}

A dg Lie algebra over $\Omega$ is a dg $\Omega$-module $L$ together with a Lie bracket $\ell \colon L\tensor_\Omega L \to L$ satisfying the usual anti-symmetry and Jacobi relations. Quillen's generalization of the Chevalley-Eilenberg construction can be extended to dg Lie algebras over $\Omega$, yielding a functor
$$\QC_\Omega \colon dgl(\Omega) \to dgc(\Omega).$$
The underlying coalgebra $\QC_\Omega(L)$ is the symmetric coalgebra $S_\Omega(sL)$, where
$$S_\Omega(V) = \bigoplus_{k\geq 0} (V^{\tensor_\Omega k})_{\Sigma_n},$$
for an $\Omega$-module $V$. The differential is defined as usual, see e.g., \cite[p.301]{FHT-RHT}.
If $L$ is a dg Lie algebra over $\QQ$, then $L\tensor \Omega$ is a dg Lie algebra over $\Omega$ and there is a natural isomorphism of dg coalgebras over $\Omega$,
$$\QC_\Omega(L\tensor \Omega) \cong \QC(L)\tensor \Omega.$$

\begin{proposition} \label{prop:mc}
Let $L$ be a connected dg Lie algebra and let $\Omega$ be a bounded commutative cochain algebra. There is a natural bijection
$$e\colon \MC(L\tensor \Omega) \cong \GG\big(\QC_\Omega(L\tensor \Omega)\big),$$
$$e(\tau) = \sum_{k\geq 0} \frac{1}{k!} s\tau^{\wedge_\Omega k} \in \QC_\Omega(L\tensor \Omega).$$
\end{proposition}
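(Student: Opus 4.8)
The plan is to split the two defining conditions of a group-like element, namely the coalgebra conditions (counit and comultiplication) and the cocycle condition $d\xi = 0$, and match them respectively with the shape of the exponential series and with the Maurer--Cartan equation. Throughout I use the identification $\QC_\Omega(L\tensor\Omega)\cong \QC(L)\tensor\Omega = S(sL)\tensor\Omega$. First I would check that $e(\tau)$ is a well-defined element. Since $L$ is connected, $(sL)_j = 0$ for $j\leq 0$, so every degree-$0$ element of $sL\tensor\Omega$ lies in $\bigoplus_{j\geq 1}(sL)_j\tensor\Omega^j$; in particular $s\tau$ has strictly positive $\Omega$-weight. As $\Omega$ is bounded, say $\Omega^k = 0$ for $k>n$, the $k$-fold product $s\tau^{\wedge_\Omega k}$ acquires $\Omega$-degree $\geq k$ and hence vanishes for $k>n$. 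Thus the sum defining $e(\tau)$ is finite, it is a genuine degree-$0$ element with constant term $1$, and already $\epsilon(e(\tau)) = 1$.

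Next I would verify the remaining coalgebra condition formally. Writing $u = s\tau$, which is an even (degree-$0$) element, the binomial identity $\Delta(u^{\wedge_\Omega k}) = \sum_{i+j=k}\binom{k}{i}\, u^{\wedge_\Omega i}\tensor_\Omega u^{\wedge_\Omega j}$ for the unshuffle coproduct on $S_\Omega(sL\tensor\Omega)$ gives $\Delta(e(\tau)) = e(\tau)\tensor_\Omega e(\tau)$, so $e(\tau)$ is group-like for the underlying coalgebra, independently of the differential. Conversely, the standard structure theorem for the cofree cocommutative coalgebra $S_\Omega(W)$ over the $\QQ$-algebra $\Omega$, applicable because $\Omega$ is a $\QQ$-algebra (so $\exp$ and $\log$ are available) and because every element of the direct sum $S_\Omega(W)$ has finite word length, asserts that each degree-$0$ element $\xi$ with $\epsilon(\xi)=1$ and $\Delta(\xi) = \xi\tensor_\Omega\xi$ equals $\exp(v)$ for the unique degree-$0$ primitive $v = \log(\xi)\in (sL\tensor\Omega)_0 = W_0$. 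Writing $v = s\tau$ for a unique $\tau$ of degree $-1$ identifies the coalgebra-group-like elements precisely with the set $\{e(\tau)\}$; comparison of word-length-$1$ components then shows $e$ is injective.

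The crux is the differential. Using that the Chevalley--Eilenberg differential on $S_\Omega(sL\tensor\Omega)$ is a coderivation whose linear part is induced by $d_{L\tensor\Omega}$ and whose quadratic part is induced by the bracket, I would evaluate it on the exponential series. The linear part contributes $e(\tau)\wedge_\Omega s(d\tau)$, since differentiating each of the $k$ factors of $u^{\wedge_\Omega k}$ produces the prefactor $k/k! = 1/(k-1)!$; the quadratic part contributes $\tfrac12\, e(\tau)\wedge_\Omega s[\tau,\tau]$, the coefficients $\binom{k}{2}/k! = 1/\big(2(k-2)!\big)$ reproducing the exponential prefactor. Combining yields
$$
d\big(e(\tau)\big) = \pm\, e(\tau)\wedge_\Omega s\Big(d\tau + \tfrac12[\tau,\tau]\Big).
$$
Because $e(\tau)$ has constant term $1$, multiplication by it is injective on word-length components, so $d(e(\tau)) = 0$ if and only if $d\tau + \tfrac12[\tau,\tau] = 0$, i.e.\ if and only if $\tau\in\MC(L\tensor\Omega)$. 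This shows at once that $e$ lands in $\GG$ and, together with the structure theorem, that it is surjective: a coalgebra-group-like cocycle is $e(\tau)$ for a Maurer--Cartan $\tau$. Naturality in $\Omega$ (and in $L$) is immediate from the explicit formula. I expect the main obstacle to be the bookkeeping in this last computation, namely pinning down the Koszul signs introduced by the suspension $s$ and by permuting the even factors $s\tau$ past the odd element $s\big(d\tau+\tfrac12[\tau,\tau]\big)$, so that the coefficients genuinely collapse to the claimed exponential form.
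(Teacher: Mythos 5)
Your proposal is correct and follows essentially the same route as the paper: the identical convergence argument (connectedness of $L$ forces $s\tau$ into positive $\Omega$-weight, boundedness of $\Omega$ kills high powers), then grouplikeness of the exponential, then the equivalence of $d(e(\tau))=0$ with the Maurer--Cartan equation. You merely fill in the details the paper leaves to the reader --- the $\log$/primitives argument for bijectivity and the explicit computation $d(e(\tau)) = \pm\, e(\tau)\wedge_\Omega s\bigl(d\tau + \tfrac12[\tau,\tau]\bigr)$ --- and you correctly have $\epsilon(e(\tau))=1$ where the paper's proof contains the typo $\epsilon(e(\tau))=0$.
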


\begin{proof}
The crucial observation is that this series converges since $\Omega$ is bounded and $L$ is connected. Say $\Omega^k = $ unless $0\leq k \leq n$. Then
$$\big(L\tensor \Omega \big)_{-1} = L_0\tensor \Omega^1 \oplus \cdots \oplus L_{n-1} \tensor \Omega^n ,$$
whence $\tau \in L\tensor \Omega^+$ for every element $\tau$ of degree $-1$. Since $(\Omega^+)^k = 0$ for $k>n$, this implies that
$$s\tau \wedge_\Omega \cdots\wedge_\Omega s\tau  = 0$$
whenever there are more than $n$ factors. Clearly, $\Delta(e(\tau)) =e (\tau)\tensor e(\tau)$ and $\epsilon(e(\tau)) = 0$. As the reader may check, the equation $d(e(\tau)) = 0$ is equivalent to the Maurer-Cartan equation for $\tau$.
\end{proof}

\begin{corollary}
There is a natural isomorphism of simplicial sets,
$$\MC_\bullet(L) \cong \langle \QC(L) \rangle,$$
for every connected dg Lie algebra $L$.
\end{corollary}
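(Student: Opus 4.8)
The plan is to obtain the claimed isomorphism of simplicial sets by applying Proposition~\ref{prop:mc} levelwise and then checking compatibility with the simplicial operators. At simplicial level $n$, the set $\MC_\bullet(L)_n$ is by definition $\MC(L\tensor \Omega_n)$, while $\langle \QC(L)\rangle_n$ is $\GG(\QC(L)\tensor \Omega_n)$. Since $\Omega_n^k = 0$ unless $0\leq k\leq n$, the cochain algebra $\Omega_n$ is bounded, so Proposition~\ref{prop:mc} supplies a natural bijection $e\colon \MC(L\tensor \Omega_n)\cong \GG\big(\QC_{\Omega_n}(L\tensor \Omega_n)\big)$. Composing with the natural isomorphism of dg coalgebras $\QC_{\Omega_n}(L\tensor \Omega_n)\cong \QC(L)\tensor \Omega_n$ recorded above, and then passing to group-like elements, produces a bijection $\MC_\bullet(L)_n \cong \langle \QC(L)\rangle_n$ for every $n$.

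First I would make precise how the two sides acquire their simplicial structure, since the isomorphism must respect it. Both are obtained by applying the contravariant assignment $[n]\mapsto \Omega_n$ followed by a covariant functor to sets: a simplicial operator $[m]\to[n]$ induces a morphism of commutative cochain algebras $\varphi\colon \Omega_n\to \Omega_m$, which yields a dg Lie algebra map $1\tensor \varphi\colon L\tensor \Omega_n\to L\tensor \Omega_m$ on the $\MC$ side and a dg coalgebra map $\QC(L)\tensor \varphi$ on the $\GG$ side. Hence it suffices to check that the levelwise bijection is natural in $\Omega$, that is, that for every morphism $\varphi\colon \Omega\to \Omega'$ of bounded commutative cochain algebras the evident square relating $\MC(L\tensor\Omega)$, $\MC(L\tensor\Omega')$, $\GG(\QC(L)\tensor\Omega)$ and $\GG(\QC(L)\tensor\Omega')$ commutes.

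This naturality is precisely the content already asserted in Proposition~\ref{prop:mc}, combined with the naturality of the isomorphism $\QC_\Omega(L\tensor\Omega)\cong \QC(L)\tensor\Omega$, so the verification reduces to unwinding the formula for $e$. Concretely, $e(\tau)=\sum_{k\geq 0}\frac{1}{k!}\,s\tau^{\wedge_\Omega k}$, and since $\varphi$ is a map of algebras it commutes with the symmetric wedge products over the respective base rings and with the suspension, giving $e\big((1\tensor\varphi)\tau\big)=(\QC(L)\tensor\varphi)\big(e(\tau)\big)$. I expect the only point requiring care to be pure bookkeeping: confirming that the simplicial identities on $\Omega_\bullet$ are transported correctly through both functors, and that the series defining $e$ converges uniformly at each level, which it does because every $\Omega_n$ is bounded. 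Assembling the naturality squares over all simplicial operators then upgrades the levelwise bijections to an isomorphism of simplicial sets $\MC_\bullet(L)\cong\langle\QC(L)\rangle$.
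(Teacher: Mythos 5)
Your proposal is correct and follows essentially the same route as the paper: the paper's proof likewise applies Proposition~\ref{prop:mc} levelwise over the bounded algebras $\Omega_n$ and composes with the natural isomorphism $\QC_{\Omega}(L\tensor \Omega)\cong \QC(L)\tensor \Omega$, with the simplicial compatibility following from naturality in $\Omega$. Your explicit verification that $e$ commutes with cochain algebra morphisms simply spells out the naturality the paper leaves implicit.
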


\begin{proof}
Indeed, $\MC_\bullet(L) = \MC(L\tensor \Omega_\bullet) \cong \GG\big(\QC_\Omega(L\tensor \Omega_\bullet)\big) \cong \GG\big(\QC(L)\tensor \Omega_\bullet\big)$.
\end{proof}

Recall that for a commutative dg algebra $A$, the spatial realization is defined by
$$\langle A\rangle = \Hom_{dga}(A,\Omega_\bullet),$$
see e.g., \cite{Berglund}.
We use the same notation as for the coalgebra realization, but it should be clear from the context which one is used.

\begin{proposition} \label{prop:dga}
Let $A$ be a commutative cochain algebra of finite type with dual dg coalgebra $A^\vee$. Then there is a natural isomorphism
$$\langle A^\vee \rangle \cong \langle A\rangle.$$
\end{proposition}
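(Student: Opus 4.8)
The plan is to produce, for each bounded commutative cochain algebra $\Omega$, a bijection
$$\GG(A^\vee \tensor \Omega) \cong \Hom_{dga}(A,\Omega)$$
that is natural in both $A$ and $\Omega$, and then to apply it levelwise with $\Omega = \Omega_\bullet$ to obtain the asserted isomorphism of simplicial sets $\langle A^\vee\rangle \cong \langle A\rangle$. The essential input is the finite type hypothesis on $A$: it guarantees the canonical identifications $A^\vee \tensor \Omega \cong \Hom_\QQ(A,\Omega)$ and $(A\tensor A)^\vee \cong A^\vee \tensor A^\vee$, under which the coalgebra structure on $A^\vee$ is literally the transpose of the algebra structure on $A$.

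First I would fix a homogeneous basis $\{e_i\}$ of $A$, finite in each degree by finite type, with dual basis $\{e_i^*\}$ of $A^\vee$, and write a general degree-zero element of $A^\vee \tensor \Omega$ as $\xi = \sum_i e_i^* \tensor \omega_i$ with each $\omega_i\in\Omega$ of the same (cohomological) degree as $e_i$. Under the identification $A^\vee \tensor \Omega \cong \Hom_\QQ(A,\Omega)$, such a $\xi$ corresponds exactly to the degree-preserving $\QQ$-linear map $\phi\colon A\to \Omega$ determined by $\phi(e_i) = \omega_i$, and conversely every such $\phi$ arises this way. Thus degree-zero elements of $A^\vee\tensor\Omega$ are in bijection with morphisms of graded $\QQ$-modules $A\to\Omega$, and it remains only to match the group-like conditions with the dg-algebra axioms.

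The heart of the argument is this decoding. The counit condition $\epsilon(\xi)=1$ translates to $\phi(1_A)=1_\Omega$, i.e.\ unitality, since the counit of $A^\vee$ is dual to the unit $\QQ\to A$. Writing the comultiplication as $\Delta(e_k^*) = \sum_{i,j} c^k_{ij}\, e_i^* \tensor e_j^*$, where the structure constants $c^k_{ij}$ are those of the product $e_ie_j = \sum_k c^k_{ij} e_k$ on $A$, a comparison of coefficients in the grouplike equation $\Delta(\xi) = \xi\tensor_\Omega\xi$ yields $\omega_i\omega_j = \sum_k c^k_{ij}\omega_k$, which is precisely multiplicativity $\phi(e_i)\phi(e_j) = \phi(e_ie_j)$. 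Finally, since the differential on $A^\vee$ is the transpose of that on $A$, the cocycle condition $d(\xi)=0$ unwinds to $\phi\circ d_A = d_\Omega\circ\phi$, i.e.\ $\phi$ is a chain map. Hence $\xi$ is group-like if and only if $\phi$ is a morphism of dg algebras.

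The main obstacle is bookkeeping rather than conceptual: one must track the Koszul signs arising from permuting the tensor factors in $A^\vee\tensor\Omega$ and from dualizing the product and the differential of $A$, and check that these signs cancel so that the group-like equations reduce exactly to the dg-algebra axioms under the grading conventions in force (here $A^\vee$ carries the grading opposite to $A$). Once the bijection is established for a single $\Omega$, naturality in $\Omega$ is automatic, as every ingredient---the identification $A^\vee\tensor(-)\cong\Hom_\QQ(A,-)$ and the decoding of the three conditions---is natural in the coefficient algebra; applying it to $\Omega_\bullet$ therefore gives the desired isomorphism of simplicial sets, while naturality in $A$ follows from functoriality of $(-)^\vee$.
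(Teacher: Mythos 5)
Your proof is correct and follows essentially the same route as the paper: both use the finite type hypothesis on $A$ (together with boundedness of $\Omega$) to identify $A^\vee \tensor \Omega \cong \Hom(A,\Omega)$ naturally, and then check that under this identification the group-like conditions in $A^\vee \tensor \Omega$ correspond exactly to the dg algebra morphism conditions for maps $A\to\Omega$. Your basis-level verification of the counit/comultiplication/cocycle decoding simply fills in details the paper leaves to the reader.
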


\begin{proof}
For a bounded commutative cochain algebra $\Omega$ and a finite type dg algebra $A$, there is a natural isomorphism of chain complexes
$$A^\vee \tensor \Omega \cong \Hom(A,\Omega).$$
Under this isomorphism, group-like elements in the dg coalgebra $A^\vee \tensor \Omega$ correspond to morphisms of dg algebras $A\to \Omega$.
\end{proof}

Note that the spatial realization of dg coalgebras preserves products, $\langle C\tensor D \rangle \cong \langle C \rangle \times \langle D \rangle$. In particular, since the universal enveloping algebra $U\gl$ of a dg Lie algebra $\gl$ is a dg Hopf algebra, i.e., a group object in the category of dg coalgebras, its spatial realization $\langle U\gl \rangle$ is a simplicial group.
We also remark that for every commutative cochain algebra $\Omega$, the forgetful functor $dga(\Omega)\to dgl(\Omega)$ admits a left adjoint $U_\Omega\colon dgl(\Omega)\to dga(\Omega)$.

\begin{proposition} \label{prop:exp}
Let $\gl$ be a simply connected dg Lie algebra. There is a natural isomorphism of simplicial groups
$$\exp_\bullet(\gl)\cong \langle U\gl \rangle.$$
\end{proposition}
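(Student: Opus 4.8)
The plan is to build the isomorphism one simplicial level at a time and then check naturality. Fix a simplicial degree and abbreviate $\Omega=\Omega_n$, a bounded commutative cochain algebra. Just as $\QC_\Omega(\gl\tensor\Omega)\cong\QC(\gl)\tensor\Omega$, the enveloping-algebra functor over $\Omega$ satisfies $U_\Omega(\gl\tensor\Omega)\cong U\gl\tensor\Omega$ as cocommutative dg Hopf algebras over $\Omega$, so $\langle U\gl\rangle_n=\GG(U\gl\tensor\Omega)$ is the group of group-like elements of this Hopf algebra. Because the coproduct is $\Omega$-linear and $\Omega$ is flat over $\QQ$, its module of primitives is $\operatorname{Prim}(U\gl)\tensor\Omega=\gl\tensor\Omega$. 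I would then define the exponential map
$$
\theta\colon Z_0(\gl\tensor\Omega)\longrightarrow \GG(U\gl\tensor\Omega),\qquad \theta(x)=\sum_{k\geq 0}\frac{1}{k!}\,x^k,
$$
with powers taken in the associative algebra $U\gl\tensor\Omega$. Here simple connectivity of $\gl$ is exactly what is needed: a degree-$0$ element $x$ lies in $\gl_1\tensor\Omega^1\oplus\gl_2\tensor\Omega^2\oplus\cdots$, so $x^k$ has $\Omega$-degree at least $k$ and therefore vanishes once $k$ exceeds the top degree of $\Omega$; the series is a finite sum. Moreover every such $x$ has total degree $0$, so all Koszul signs in what follows are trivial and the classical (ungraded) Hopf-algebra and Baker--Campbell--Hausdorff identities apply verbatim.

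To see that $\theta$ lands in $\GG$, note that $x$ is primitive, so $\Delta(\theta(x))=\theta(x)\tensor\theta(x)$; that $\epsilon(x)=0$, so $\epsilon(\theta(x))=1$; and that $d(x)=0$ together with the Leibniz rule gives $d(\theta(x))=0$. For the inverse I would use the logarithm $\log(\xi)=\sum_{k\geq 1}\frac{(-1)^{k-1}}{k}(\xi-1)^k$. When $\xi$ is group-like, $\xi-1$ lies in the augmentation ideal, which in degree $0$ is again concentrated in positive $\Omega$-degree, so the sum terminates. The identity $\log(\xi\tensor\xi)=\log(\xi)\tensor 1+1\tensor\log(\xi)$ shows that $\log(\xi)$ is primitive, hence lies in $\gl\tensor\Omega$, and $d(\xi)=0$ forces $d(\log\xi)=0$, so $\log(\xi)\in Z_0(\gl\tensor\Omega)$. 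As formal power series $\exp$ and $\log$ are mutually inverse, and since both sums are finite this makes $\theta$ a bijection. Finally, the identity $\theta(x)\theta(y)=\theta(\operatorname{BCH}(x,y))$ holds in the associative algebra, and $\operatorname{BCH}(x,y)$, being assembled from iterated commutators of primitives, again lies in $Z_0(\gl\tensor\Omega)$; since the group law on $\exp Z_0(\gl\tensor\Omega)$ is by definition BCH, $\theta$ is a group isomorphism.

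The defining formula for $\theta$ is natural in $\Omega$, so letting $\Omega$ range over $\Omega_\bullet$ assembles the level-wise isomorphisms into the desired isomorphism of simplicial groups $\exp_\bullet(\gl)\cong\langle U\gl\rangle$. I expect the one genuinely non-formal point to be the surjectivity of $\theta$, equivalently the claim that $\log(\xi)$ of a group-like element is a \emph{primitive} cycle: this rests on the characteristic-zero identification $\operatorname{Prim}(U\gl)=\gl$, i.e.\ the Milnor--Moore theorem for graded Lie algebras (see \cite{Quillen}), base-changed along $\QQ\to\Omega$ by flatness, together with the boundedness argument that guarantees $\log$ terminates. Once those two inputs are in place, convergence, the Hopf relations, and the passage through BCH are all formal.
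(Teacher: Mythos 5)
Your proof is correct and takes essentially the same route as the paper's: the levelwise exponential map $\exp\colon Z_0(\gl\tensor \Omega)\to \GG U_\Omega(\gl\tensor \Omega)$ through the identification $U\gl\tensor\Omega\cong U_\Omega(\gl\tensor\Omega)$, with simple connectivity forcing degree-$0$ elements into $\gl\tensor\Omega^+$ so the series terminates, and compatibility with the group structure via Campbell--Baker--Hausdorff. You merely spell out what the paper leaves implicit in the phrase ``the isomorphism is effected by the exponential map,'' namely the verification of group-likeness, the terminating logarithm as inverse, and the identification $\operatorname{Prim}(U\gl)\tensor\Omega = \gl\tensor\Omega$ via Milnor--Moore and flatness.
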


\begin{proof}
Let $\Omega$ be a bounded commutative cochain algebra, say $\Omega^k = 0$ unless $0\leq k\leq n$.
Observe that there is a canonical isomorphism $U\gl \tensor \Omega \cong U_{\Omega}(\gl\tensor \Omega)$.
The isomorphism is effected by the exponential map
$$\exp\colon Z_0(\gl\tensor \Omega) \to \GG U_\Omega(\gl\tensor \Omega),$$
$$\exp(x) = \sum_{k\geq 0} \frac{1}{k!} x^k,$$
where the product $x^k$ is taken in  $U_{\Omega}(\gl\tensor \Omega)$. The crucial point is that the sum converges.
Indeed, since $\gl$ is simply connected,
$$\big(\gl\tensor \Omega\big)_0 = \gl_1 \tensor \Omega^1 + \cdots + \gl_n\tensor \Omega^n,$$
so $x\in \gl\tensor \Omega^+$, whence $x^k = 0$ for $k>n$, whenever $x$ is an element of degree $0$.
The fact that $\exp$ respects the group structure is essentially by design of the Campbell-Baker-Hausdorff group structure.
\end{proof}

\subsection{Principal dg coalgebra bundles}
Next, recall Quillen's theory of principal dg coalgebra bundles \cite[Appendix B, \S5]{Quillen}. In particular, recall that $\QC(\gl)$ serves as a classifying space for principal $\gl$-bundles. Quillen's universal principal $\gl$-bundle may be identified with
$$U\gl \to \QC(U\gl;\gl)\to \QC(\gl),$$
where $U\gl$ is the universal enveloping algebra of $\gl$ and $\QC(U\gl;\gl)$ is the Chevalley-Eilenberg complex of $\gl$ with coefficients in the right $\gl$-module $U\gl$.

\begin{theorem} \label{thm:principal bundle}
Let $\gl$ be a simply connected dg Lie algebra of finite type. The realization of the universal principal $\gl$-bundle,
$$\langle U\gl \rangle\to \langle \QC(U\gl;\gl)\rangle \to \langle \QC(\gl) \rangle,$$
is a universal principal $\langle U\gl \rangle$-bundle.
\end{theorem}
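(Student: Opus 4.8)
The plan is to verify that the realized sequence
$$\langle U\gl \rangle\to \langle \QC(U\gl;\gl)\rangle \to \langle \QC(\gl) \rangle$$
is a principal $\langle U\gl\rangle$-bundle (a simplicial principal fibration) with contractible total space, since a principal $\langle U\gl\rangle$-bundle is universal precisely when its total space is contractible. First I would unwind what the spatial realization $\langle - \rangle = \GG(-\tensor \Omega_\bullet)$ does to each object in Quillen's sequence. The point of working with finite type and simple connectivity is that Propositions~\ref{prop:exp} and the remark preceding it identify $\langle U\gl\rangle \cong \exp_\bullet(\gl)$ as a simplicial group, and the discussion of $\GG$ shows that $\langle-\rangle$ sends the coalgebra-level $U\gl$-coaction $\QC(U\gl;\gl)\to \QC(U\gl;\gl)\tensor U\gl$ to a genuine right action of the simplicial group $\langle U\gl\rangle$ on $\langle \QC(U\gl;\gl)\rangle$, using that $\langle-\rangle$ preserves products (so $\langle C\tensor U\gl\rangle \cong \langle C\rangle \times \langle U\gl\rangle$). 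Thus the realized sequence is at least a sequence of simplicial sets with a $\langle U\gl\rangle$-action on the middle term.

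Next I would establish the two defining properties of a principal bundle for this realized sequence: that the action is free and that the projection to $\langle\QC(\gl)\rangle$ is the quotient, i.e.\ the base is the orbit simplicial set and the sequence is locally trivial in the simplicial sense (a Kan fibration with fiber $\langle U\gl\rangle$). Here the key input is that $\langle-\rangle$ is compatible with Quillen's principal-bundle structure levelwise: for each bounded $\Omega = \Omega_n$, applying $\GG(-\tensor\Omega)$ to the coalgebra bundle $U\gl\to\QC(U\gl;\gl)\to\QC(\gl)$ should again be a principal $\GG(U\gl\tensor\Omega)$-bundle of sets, because the natural isomorphism $\QC_\Omega(\gl\tensor\Omega)\cong\QC(\gl)\tensor\Omega$ and its analogue for the coefficient complex let me transport Quillen's purely algebraic bundle structure over $\QQ$ to a bundle structure over $\Omega$, whose group-like elements then assemble. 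Concretely, I would show that $\GG(\QC(U\gl;\gl)\tensor\Omega_\bullet)$ surjects onto $\GG(\QC(\gl)\tensor\Omega_\bullet)$ with fiber a torsor under $\GG(U\gl\tensor\Omega_\bullet) = \exp_\bullet(\gl)$, using the explicit group-like elements $e(\tau)$ from Proposition~\ref{prop:mc} to lift and compare.

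Finally I would check contractibility of the total space $\langle \QC(U\gl;\gl)\rangle$. The natural approach is to observe that over $\QQ$ the Chevalley-Eilenberg complex with coefficients in $U\gl$ is acyclic in the relevant sense (this is exactly what makes Quillen's algebraic bundle universal), and then argue that $\langle-\rangle$ carries this acyclicity to weak contractibility of the realization; alternatively one realizes the total space as the nerve of a suitable contractible object or shows $\langle\QC(U\gl;\gl)\rangle$ is a simplicial group acting principally with contractible fibers, forcing contractibility. I expect \emph{this last step} — passing from the algebraic acyclicity of $\QC(U\gl;\gl)$ over $\QQ$ to contractibility of its spatial realization — to be the main obstacle, since $\langle-\rangle$ is not an exact functor and does not obviously preserve quasi-isomorphisms on unbounded complexes; the simple connectivity and finite-type hypotheses, together with the convergence arguments already used in Propositions~\ref{prop:mc} and~\ref{prop:exp}, are precisely what should be leveraged to control this passage.
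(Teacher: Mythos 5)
Your overall architecture---realize the $U\gl$-coaction using that $\langle - \rangle$ preserves products, verify principality levelwise over each bounded $\Omega$, then prove the total space contractible---is a legitimate route, and it is essentially the verification that has to happen \emph{somewhere}. But note that the paper's own proof does none of it: it is a two-line reduction. By Proposition \ref{prop:dga}, since $\gl$ is simply connected and of finite type, the coalgebra realizations of $U\gl$, $\QC(U\gl;\gl)$ and $\QC(\gl)$ coincide with the algebra realizations of their dual cochain algebras, and in that dual picture the statement is already proved in \cite[Chapter 25]{FHT-RHT}. So in the paper the two hypotheses are used precisely to make dualization work, after which the hard content is imported wholesale; in your plan they would instead enter through convergence estimates that you must carry out yourself.

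That brings out the genuine gap, which is the one you flag: your third step is not a difficulty to be ``expected'' and left open---it \emph{is} the theorem. Algebraic acyclicity of $\QC(U\gl;\gl)$ does not formally yield contractibility of $\GG\big(\QC(U\gl;\gl)\tensor\Omega_\bullet\big)$, because $\GG(-\tensor\Omega_\bullet)$ is not homotopy invariant: it does not carry quasi-isomorphisms of dg coalgebras to weak equivalences, just as $\MC_\bullet$ fails to preserve quasi-isomorphisms of dg Lie algebras without control of a filtration (cf.~\cite{Hinich}). A correct direct argument has to combine the explicit contracting homotopy of the Koszul-type complex $\QC(U\gl;\gl)$ with the nilpotence coming from $\Omega$ bounded and $\gl$ simply connected, inducting along a suitable filtration---none of which is sketched. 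There is also a smaller elision in your principality step: under the identification of $\QC(U\gl;\gl)$ with $\QC(\gl)\tensor U\gl$, the naive lift $e(\tau)\tensor 1$ of a group-like element $e(\tau)$ from Proposition \ref{prop:mc} is \emph{not} a cycle for the Chevalley--Eilenberg differential with coefficients (its boundary is $\pm\, e(\tau)\tensor\tau$), so surjectivity onto $\GG\big(\QC(\gl)\tensor\Omega\big)$ requires solving a holonomy-type equation $du=\pm\,\tau u$ for a group-like $u\in U\gl\tensor\Omega$, convergent only by the same nilpotence; ``lift and compare using $e(\tau)$'' glosses this. If you want a complete proof without doing these verifications by hand, the paper's dualization via Proposition \ref{prop:dga} together with the citation to \cite[Chapter 25]{FHT-RHT} is the efficient fix.
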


\begin{proof}
This is proved in \cite[Chapter 25]{FHT-RHT}. Indeed, when $\gl$ is simply connected and of finite type, the coalgebra realization of $U\gl$ is the same as the algebra realization of the dual dg algebra $U\gl^\vee$.
\end{proof}

\begin{corollary} \label{cor:deloop}
Let $\gl$ be a simply connected dg Lie algebra of finite type. The nerve $\MC_\bullet(\gl)$ is a delooping of the simplicial group $\exp_\bullet(\gl)$.
\end{corollary}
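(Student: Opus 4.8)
The plan is to chain together the three preceding results so that the corollary becomes a formal consequence, with Theorem~\ref{thm:principal bundle} carrying essentially all of the content. First I would rewrite both sides of the desired equivalence in terms of spatial realizations. The corollary to Proposition~\ref{prop:mc} gives a natural isomorphism of simplicial sets $\MC_\bullet(\gl)\cong\langle\QC(\gl)\rangle$, which applies here since a simply connected dg Lie algebra is in particular connected. Proposition~\ref{prop:exp} gives a natural isomorphism of simplicial groups $\exp_\bullet(\gl)\cong\langle U\gl\rangle$, valid because $\gl$ is simply connected. Under these two identifications, the assertion that $\MC_\bullet(\gl)$ is a delooping of $\exp_\bullet(\gl)$ becomes the assertion that $\langle\QC(\gl)\rangle$ is a delooping of the simplicial group $\langle U\gl\rangle$.

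This reformulated statement is exactly the content of Theorem~\ref{thm:principal bundle}, once one recalls what a universal principal bundle buys us. That theorem exhibits
$$\langle U\gl\rangle\to\langle\QC(U\gl;\gl)\rangle\to\langle\QC(\gl)\rangle$$
as a universal principal $\langle U\gl\rangle$-bundle. By definition this means that the total space $\langle\QC(U\gl;\gl)\rangle$ is weakly contractible and $\langle U\gl\rangle$ acts principally; the base $\langle\QC(\gl)\rangle$ is then a classifying space for $\langle U\gl\rangle$. Reading off the associated fibration sequence and using contractibility of the middle term, the connecting map yields a weak equivalence $\langle U\gl\rangle\simeq\Omega\langle\QC(\gl)\rangle$, i.e.\ $\langle\QC(\gl)\rangle$ deloops $\langle U\gl\rangle$. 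Transporting along the two isomorphisms above gives the corollary.

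I expect the only genuinely delicate point to be the passage from \emph{universal principal bundle} to \emph{delooping}, and in particular the verification that the identification $\exp_\bullet(\gl)\cong\langle U\gl\rangle$ of Proposition~\ref{prop:exp} is compatible with the principal $\langle U\gl\rangle$-action appearing in Theorem~\ref{thm:principal bundle}, so that the deloopings match as simplicial groups and not merely as simplicial sets. This is, however, a matter of bookkeeping: both the group structure on $\langle U\gl\rangle$ and the bundle action are induced by the Hopf-algebra structure on $U\gl$, and the exponential isomorphism of Proposition~\ref{prop:exp} was set up precisely to respect that structure. The genuinely substantive input — that the realization of Quillen's universal principal $\gl$-bundle is again universal, with weakly contractible total space — is supplied by Theorem~\ref{thm:principal bundle}, so no further homotopy-theoretic work is required.
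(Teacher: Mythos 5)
Your proposal is correct and takes essentially the same route as the paper: the paper's (one-line) proof simply combines the isomorphisms $\exp_\bullet(\gl)\cong\langle U\gl\rangle$ and $\MC_\bullet(\gl)\cong\langle\QC(\gl)\rangle$, with Theorem~\ref{thm:principal bundle} implicitly supplying the passage from universal principal bundle to delooping exactly as you spell out. Your additional care about the Hopf-algebraic compatibility of the group structures is sound but is already guaranteed by Proposition~\ref{prop:exp} being an isomorphism of simplicial \emph{groups}.
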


\begin{proof}
We have the isomorphisms $\exp_\bullet(\gl) \cong \langle U \gl \rangle$ and $\MC_\bullet(\gl) \cong \langle \QC(\gl) \rangle$.
\end{proof}

\begin{remark}
Since we work with coalgebras, the finite type hypothesis on $\gl$ can be dropped in Theorem \ref{thm:principal bundle} and Corollary \ref{cor:deloop}. However, we will not repeat the lengthy argument here since $\gl$ will be of finite type in our applications.
\end{remark}

\subsection{Twisted semi-direct products and Borel constructions} \label{sec:tsdp}
We begin by recalling certain aspects of Tanr\'e's classification of fibrations in the category of dg Lie algebras \cite[Chapitre VII]{Tanre}.

\begin{definition} \label{def:outer action}
Let $\gl$ and $L$ be dg Lie algebras. An \emph{outer action} of $\gl$ on $L$ consists of two maps
$$\alpha\colon L\tensor \gl \to L,\quad \xi \colon \gl \to L,$$
satisfying the following conditions for all $x,y\in \gl$ and $a,b\in L$, where we write
$$a\ldotp x = \alpha(a\tensor x),\quad x \ldotp a = - (-1)^{|a||x|} a\ldotp x.$$
Firstly, the map $\alpha$ defines an action of $\gl$ on $L$ by derivations, i.e.,
$$[x,y]\ldotp a = x\ldotp ( y\ldotp a) -(-1)^{|x||y|} y\ldotp (x\ldotp a),$$
$$x\ldotp [a,b] = [x \ldotp a,b] + (-1)^{|x||a|}[a,x\ldotp b].$$
Secondly, the map $\xi$ is a chain map of degree $-1$ and a derivation, i.e.,
$$d\xi(x) = -\xi(dx),$$
$$\xi[x,y] = \xi(x)\ldotp y + (-1)^{|x|}x\ldotp \xi(y).$$
Finally, the action and $\xi$ are connected by the equation
$$d(x\ldotp a) = d(x)\ldotp a + (-1)^{|x|}x\ldotp d(a) + [\xi(x),a].$$
\end{definition}

\begin{definition} \label{def:tsdp}
Given an outer action of $\gl$ on $L$, the \emph{twisted semi-direct product} $L\rtimes_\xi \gl$ is the dg Lie algebra
whose underlying graded Lie algebra is the semi-direct product of $\gl$ acting on $L$,
$$\big[(a,x),(b,y)\big] = \big( [a,b] + x\ldotp b + a\ldotp y,[x,y]\big),$$
and whose differential is twisted by $\xi$ in the sense that
$$\partial^\xi(a,x) = (da + \xi(x),dx).$$
\end{definition}
The twisted semi-direct product is the total space in a short exact sequence (i.e.~ fibration sequence) of dg Lie algebras,
\begin{equation} \label{eq:fibration sequence}
0 \to L \to L\rtimes_\xi \gl \to \gl \to 0.
\end{equation}
The section $\gl \to L\rtimes_\xi \gl$, $x\mapsto (0,x)$, is a morphism of graded Lie algebras, but it commutes with differentials if and only if $\xi = 0$.

Outer actions on $L$ are classified by the dg Lie algebra
$$\Der L\ltimes_{\ad} sL,$$
whose underlying graded Lie algebra is the semi-direct product of $\Der L$ acting on the abelian dg Lie algebra $sL$ from the left by
$$\theta \ldotp sx = (-1)^{|\theta|} s \theta(x),$$
and whose differential is given by
$$\partial\big(\theta,sx\big) = \big(\partial(\theta) + \ad_x,-sd(x)\big),$$
where $\ad_x\in \Der L$ is given by $\ad_x(y) = [x,y]$.

\begin{proposition} \label{prop:outer actions}
Specifying an outer action of $\gl$ on $L$ is tantamount to specifying a morphism of dg Lie algebras
$$\phi \colon \gl \to \Der L \ltimes_{\ad} sL.$$
The correspondence is given by
$$\phi(x) = \big(\theta_x,-s\xi(x)\big),$$
where $\theta_x(a) = x\ldotp a$.
\end{proposition}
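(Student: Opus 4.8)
The plan is to take the explicit formula $\phi(x) = (\theta_x,-s\xi(x))$ as the definition of the correspondence and to verify directly that $\phi$ is a morphism of dg Lie algebras precisely when the pair $(\alpha,\xi)$ satisfies the axioms of Definition \ref{def:outer action}. The guiding principle is that each of the five axioms --- the two identities expressing that $\alpha$ is an action by derivations, the two identities for $\xi$, and the connecting equation --- will match exactly one of the conditions that make $\phi$ a morphism of dg Lie algebras, and each matching will be an equivalence. Granting this, the bijection is immediate: in the reverse direction, given any morphism $\phi$ one writes $\phi(x) = (\theta_x, sw_x)$ and recovers the data by setting $x\ldotp a = \theta_x(a)$ and $\xi(x) = -w_x$. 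First I would dispense with well-definedness and degree: since $a\ldotp x$ raises degree by $|x|$ and $\xi$ has degree $-1$, both $\theta_x$ and $s\xi(x)$ lie in degree $|x|$, so $\phi$ has degree $0$; and for $\phi(x)$ to land in $\Der L \ltimes_{\ad} sL$ one needs $\theta_x\in \Der L$, which is exactly the Leibniz identity $x\ldotp[a,b] = [x\ldotp a,b] + (-1)^{|x||a|}[a,x\ldotp b]$.

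Next I would analyze the Lie-algebra morphism condition $\phi([x,y]) = [\phi(x),\phi(y)]$ componentwise. The $\Der L$-component reads $\theta_{[x,y]} = [\theta_x,\theta_y]$, which is literally the statement that $\alpha$ is an action, namely $[x,y]\ldotp a = x\ldotp(y\ldotp a) - (-1)^{|x||y|}y\ldotp(x\ldotp a)$. For the $sL$-component I would substitute the action formula $\theta\ldotp sz = (-1)^{|\theta|}s\theta(z)$ together with the semidirect-product bracket; after converting between the left and right actions via $x\ldotp a = -(-1)^{|a||x|}a\ldotp x$, the resulting identity collapses to $\xi[x,y] = \xi(x)\ldotp y + (-1)^{|x|}x\ldotp\xi(y)$, which is the derivation property of $\xi$.

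Then I would treat the chain-map condition $\phi(dx) = \partial\phi(x)$ using the twisted differential $\partial(\theta,sz) = (\partial\theta + \ad_z,-s\,dz)$ of Definition \ref{def:tsdp}. Taking $sz = -s\xi(x)$, so that $\ad_z = -\ad_{\xi(x)}$, the $\Der L$-component becomes $\theta_{dx} = \partial\theta_x - \ad_{\xi(x)}$; evaluating on $a\in L$ and expanding $\partial\theta_x = d\theta_x - (-1)^{|x|}\theta_x d$ reproduces exactly the connecting equation $d(x\ldotp a) = dx\ldotp a + (-1)^{|x|}x\ldotp da + [\xi(x),a]$. The $sL$-component reduces to $-s\xi(dx) = s\,d\xi(x)$, i.e. $d\xi(x) = -\xi(dx)$, the statement that $\xi$ is a chain map of degree $-1$. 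At this point every axiom has been identified with one morphism condition, each identification is reversible, and the two data sets are therefore in natural bijection.

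The main obstacle is sign bookkeeping; the verification is otherwise a routine unwinding of definitions. The delicate part is that the signs must be threaded consistently through four separate conventions: the relation $x\ldotp a = -(-1)^{|a||x|}a\ldotp x$ between the left and right actions, the $\Der L$-action $\theta\ldotp sz = (-1)^{|\theta|}s\theta(z)$ on the suspension, the Koszul signs in the semidirect-product bracket, and the twist by $\ad$ in the differential. I would perform every computation on homogeneous elements and confirm term by term that the Koszul signs produced by each morphism equation coincide with those written in Definition \ref{def:outer action}, paying particular attention to the passage $y\ldotp\xi(x) = -(-1)^{(|x|-1)|y|}\xi(x)\ldotp y$ that appears when matching the $sL$-component of the bracket.
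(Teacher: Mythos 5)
Your proposal is correct and matches the paper's approach: the paper's proof is simply ``a straightforward calculation,'' and your componentwise verification --- matching the $\Der L$- and $sL$-components of the bracket and chain-map conditions for $\phi$ to the five axioms of Definition \ref{def:outer action} --- is precisely that calculation, carried out with the right sign conventions (in particular your identification of the key sign passage $y\ldotp\xi(x) = -(-1)^{(|x|-1)|y|}\xi(x)\ldotp y$ is accurate). Nothing further is needed.
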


\begin{proof}
The proof is a straightforward calculation.
\end{proof}

An outer action of $\gl$ on $L$ defines an action of $\gl$ on $\QC(L)$ by coderivations by the following formula:
\begin{align*}
(sa_1 \wedge \cdots \wedge sa_n)\ldotp x & = sa_1\wedge \cdots \wedge sa_n \wedge s\xi(x) \\
& + \sum_{i=1}^n \pm sa_1 \wedge \cdots \wedge s(a_i\ldotp x) \wedge \cdots \wedge sa_n.
\end{align*}
Equivalently, $\QC(L)$ becomes a $U\gl$-module coalgebra, i.e., a right $U\gl$-module such that the structure map $\QC(L)\tensor U\gl \to \QC(L)$ is a morphism of dg coalgebras.

The action of $\Der L\ltimes_{\ad} sL$ by coderivations on $\QC(L)$, derived from the tautological outer action on $L$, gives rise to a morphism of dg Lie algebras that we will denote
\begin{equation} \label{eq:chi}
\chi \colon \Der L\ltimes_{\ad} sL \to \Coder \QC(L).
\end{equation}

\begin{theorem} \label{thm:borel}
Let $\gl$ be a simply connected dg Lie algebra of finite type with an outer action on a connected dg Lie algebra $L$.
There is a right action of the simplicial group $G = \exp_\bullet(\gl)$ on $\MC_\bullet(L)$
and a weak equivalence of simplicial sets over $\MC_\bullet(\gl)$,
$$\MC_\bullet(L\rtimes_\xi \gl) \sim \MC_\bullet(L) \times_G EG.$$
\end{theorem}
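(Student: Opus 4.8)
The plan is to exhibit the realization of the fibration sequence $0\to L\to L\rtimes_\xi\gl\to\gl\to0$ as the associated fiber bundle, with fiber $\MC_\bullet(L)$, of the realization of Quillen's universal principal $\gl$-bundle. Throughout I identify $\MC_\bullet(\gl)\cong\langle\QC(\gl)\rangle$, and, since $L$ and $L\rtimes_\xi\gl$ are connected (because $\gl$ is simply connected), also $\MC_\bullet(L)\cong\langle\QC(L)\rangle$ and $\MC_\bullet(L\rtimes_\xi\gl)\cong\langle\QC(L\rtimes_\xi\gl)\rangle$. By Theorem~\ref{thm:principal bundle} the realization $\langle U\gl\rangle\to\langle\QC(U\gl;\gl)\rangle\to\langle\QC(\gl)\rangle$ is a universal principal $G$-bundle for $G=\exp_\bullet(\gl)=\langle U\gl\rangle$; thus $\langle\QC(U\gl;\gl)\rangle$ models $EG$ and $\langle\QC(\gl)\rangle=\MC_\bullet(\gl)$ models $BG$.

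For the first assertion, the outer action makes $\QC(L)$ a right $U\gl$-module coalgebra through the map $\chi$ of~\eqref{eq:chi}, with structure map $\QC(L)\tensor U\gl\to\QC(L)$. Since the coalgebra realization preserves products, applying $\GG(-\tensor\Omega_\bullet)$ yields an action map $\MC_\bullet(L)\times\langle U\gl\rangle\to\MC_\bullet(L)$, that is, the desired right action of $G=\exp_\bullet(\gl)$ on $\MC_\bullet(L)$.

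The heart of the argument is a coalgebra identification
$$\QC(L\rtimes_\xi\gl)\cong\QC(L)\tensor_{U\gl}\QC(U\gl;\gl),$$
compatible with the projections to $\QC(\gl)$. Using Quillen's acyclic model $\QC(U\gl;\gl)\cong U\gl\tensor\QC(\gl)$ of left $U\gl$-module dg coalgebras, the right-hand side becomes $\QC(L)\tensor\QC(\gl)$ equipped with the differential twisted by the $U\gl$-action. On the other hand, as a graded coalgebra $\QC(L\rtimes_\xi\gl)=S(sL\oplus s\gl)\cong\QC(L)\tensor\QC(\gl)$, and I would check that under this identification the Chevalley--Eilenberg differential of $L\rtimes_\xi\gl$ agrees with the twisted differential. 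The mixed terms of the former, arising from the action terms $a\ldotp y$ in the bracket $[(a,x),(b,y)]$ and from the twisting $\xi$ in $\partial^\xi$, are precisely the coderivation action of $\gl$ on $\QC(L)$ recorded in the displayed formula preceding~\eqref{eq:chi}. This step is a direct but bookkeeping-heavy comparison of differentials.

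Finally I would apply $\langle-\rangle$ to the associated bundle $\QC(L)\to\QC(L)\tensor_{U\gl}\QC(U\gl;\gl)\to\QC(\gl)$. Since realization sends Quillen's universal principal $\gl$-bundle to the universal principal $G$-bundle $EG\to BG$, it should send this associated bundle to the associated bundle of $EG\to BG$ with fiber $\MC_\bullet(L)$, namely the Borel construction $\MC_\bullet(L)\times_G EG$, over $\MC_\bullet(\gl)$. I expect the main obstacle to be this last step: verifying that $\langle-\rangle$ carries the algebraic contracted product $\tensor_{U\gl}$ to the topological $\times_G$. Because $\langle C\rangle=\GG(C\tensor\Omega_\bullet)$ is built from grouplike elements, it is not formal that it commutes with the coequalizer defining $\tensor_{U\gl}$. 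I would resolve this using freeness: as $EG\to BG$ is principal, the diagonal $G$-action on $\MC_\bullet(L)\times\langle\QC(U\gl;\gl)\rangle=\langle\QC(L)\tensor\QC(U\gl;\gl)\rangle$ is free, so both the realized quotient and the Borel quotient are locally trivial maps over $\MC_\bullet(\gl)$ with fiber $\MC_\bullet(L)$ and the same structure action, and hence agree. Equivalently, one may present both sides as the realization of the two-sided bar construction $B(\QC(L),U\gl,\QQ)$, for which $\QC(U\gl;\gl)$ serves as the acyclic bar resolution.
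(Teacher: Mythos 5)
Your proposal is correct and follows the paper's proof essentially verbatim: the right action comes from the $U\gl$-module coalgebra structure on $\QC(L)$ induced by the outer action, the key step is the dg coalgebra isomorphism $\QC(L\rtimes_\xi \gl)\cong \QC(L)\tensor_{U\gl}\QC(U\gl;\gl)$ (which the paper factors through the intermediate identification $\QC(L\rtimes_\xi\gl)\cong\QC(\QC(L);\gl)$, while you compare differentials directly on $\QC(L)\tensor\QC(\gl)$ via $\QC(U\gl;\gl)\cong U\gl\tensor\QC(\gl)$ --- the same bookkeeping), and realization together with Theorem~\ref{thm:principal bundle} finishes the argument. The only divergence is that the paper asserts $\langle \QC(L)\tensor_{U\gl}\QC(U\gl;\gl)\rangle \cong \langle \QC(L)\rangle\times_{\langle U\gl\rangle}\langle\QC(U\gl;\gl)\rangle$ without comment, whereas you correctly flag this compatibility of $\langle - \rangle$ with the contracted product as the non-formal point and sketch a reasonable resolution via freeness and local triviality over $\MC_\bullet(\gl)$.
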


\begin{proof}
The action of $\gl$ on $\QC(L)$ makes $\QC(L)$ into a right $U\gl$-module coalgebra. This yields a right action of $\exp_\bullet(\gl) \cong \langle U\gl \rangle$ on $\MC_\bullet(L) \cong \langle \QC(L)\rangle$. The key observation, which may be checked by hand, is that there is an isomorphism of dg coalgebras
$$\QC(L\rtimes_\xi \gl) \cong \QC(\QC(L);\gl).$$
Secondly, we have the standard isomorphism
$$\QC(\QC(L);\gl) \cong \QC(L) \tensor_{U\gl} \QC(U\gl;\gl).$$
By combining these isomorphisms and taking realizations, we get isomorphisms of simplicial sets
$$\MC_\bullet(L\rtimes_\xi \gl) \cong \langle \QC(L\rtimes_\xi \gl) \rangle \cong \langle \QC(L) \tensor_{U\gl} \QC(U\gl;\gl) \rangle \cong  \langle\QC(L) \rangle \times_{\langle U\gl \rangle} \langle \QC(U\gl;\gl)\rangle  .$$
By Theorem \ref{thm:principal bundle}, the simplicial set $\langle \QC(U\gl;\gl)\rangle$ is a model for $EG$. This finishes the proof.
\end{proof}

Let $L$ be a simply connected cofibrant dg Lie algebra of finite type with geometric realization
$$X = |\MC_\bullet(L)|,$$
and consider the simply connected dg Lie algebra
$$\gl = \big(\Der L \ltimes_{\ad} sL \big)\langle 1 \rangle,$$
with associated topological group
$$G = |\exp_\bullet(\gl)|.$$
There is an evident outer action of $\gl$ on $L$, whence an action of the simplicial group $\exp_\bullet(\gl)$ on the nerve $\MC_\bullet(L)$, cf.~Theorem \ref{thm:borel}, whence an action of $G$ on $X$. Since $\gl$ is simply connected, the simplicial group $\exp_\bullet(\gl)$ is reduced, i.e., has only one vertex. In particular, the topological group $G$ is connected. Therefore, the action yields a map of grouplike monoids
\begin{equation} \label{eq:aut}
G \to \aut_\circ(X).
\end{equation}
This map is a weak homotopy equivalence, as follows from, e.g., Tanr\'e's theory \cite[Chapitre VII]{Tanre}.

\subsection{Twisting functions and mapping spaces} \label{sec:mc}
Let $C$ be a dg coalgebra with coproduct $\Delta\colon C\to C\tensor C$ and let $L$ a dg Lie algebra with Lie bracket $\ell\colon L\tensor L\to L$. Recall that a \emph{twisting function} $\tau\colon C\to L$ is a Maurer-Cartan element in the dg Lie algebra $\Hom(C,L)$, whose differential and Lie bracket are given by
$$\partial(f) = d_L \circ f- (-1)^{|f|} f\circ d_C,$$
$$[f,g] = \ell \circ (f\tensor g) \circ \Delta.$$
If $\tau$ is a twisting function, then $\Hom^\tau(C,L)$ denotes the dg Lie algebra with the same underlying graded Lie algebra but twisted differential
$$\partial^\tau(f) = \partial(f) +[\tau,f].$$
Furthermore, there is an outer action of $\Coder C$ on $\Hom^\tau(C,L)$ given by
$$f\ldotp \theta = f\circ \theta,\quad \xi(\theta) = \tau_*(\theta) = \tau\circ \theta,$$
for $f\in \Hom^\tau(C,L)$ and $\theta\in \Coder C$. We note for future reference that we may make the identification
\begin{equation} \label{eq:twist}
\big(\Hom(C,L)\rtimes \Coder C \big)^\tau = \Hom^\tau(C,L)\rtimes_{\tau_*} \Coder C
\end{equation}
for every twisting function $\tau\colon C\to L$.

\begin{theorem} \label{thm:mapping space}
Let $L$ and $\Pi$ be connected dg Lie algebras and suppose $\Pi$ is nilpotent and of finite type. There is a natural weak homotopy equivalence of simplicial sets
$$\MC\big(\Hom(\QC L,\Pi\tensor \Omega_\bullet)\big) \xrightarrow{\sim} \map\big(\MC_\bullet(L),\MC_\bullet(\Pi)\big).$$
\end{theorem}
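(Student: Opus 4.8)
The plan is to build the comparison map from Quillen's twisting-function/coalgebra dictionary and then to prove it is a weak equivalence by inducting over a principal tower of $\Pi$. To construct the natural map $\Phi$, note that an $n$-simplex of the source is a Maurer--Cartan element $\tau\in\Hom(\QC L,\Pi\tensor\Omega_n)$, that is, a twisting function $\QC L\to\Pi\tensor\Omega_n$. The tensor--hom identity gives an isomorphism of convolution dg Lie algebras $\Hom(\QC L,\Pi\tensor\Omega_n)\cong\Hom_{\Omega_n}(\QC L\tensor\Omega_n,\Pi\tensor\Omega_n)$, so $\tau$ becomes an $\Omega_n$-linear twisting function, and the convolution adjunction over $\Omega_n$ turns it into a morphism of dg coalgebras over $\Omega_n$, namely $\hat\tau\colon\QC L\tensor\Omega_n\to\QC_{\Omega_n}(\Pi\tensor\Omega_n)\cong\QC\Pi\tensor\Omega_n$. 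Because dg coalgebra maps preserve group-like elements, precomposing $\hat\tau$ with the operators $\alpha^*\colon\Omega_n\to\Omega_m$ and applying $\GG(-)$ together with the identifications $\MC_\bullet(L)\cong\langle\QC L\rangle$ and $\MC_\bullet(\Pi)\cong\langle\QC\Pi\rangle$ assigns to each $(\beta,\alpha)\in(\MC_\bullet(L)\times\Delta^n)_m$ the group-like element $\hat\tau_\alpha(e(\beta))$, an $m$-simplex of $\MC_\bullet(\Pi)$, where $e$ is the bijection of Proposition \ref{prop:mc}. This defines $\Phi(\tau)\in\map(\MC_\bullet(L),\MC_\bullet(\Pi))_n$; checking that $\Phi$ is simplicial is routine.

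To show $\Phi$ is a weak equivalence I would induct over a principal decomposition of $\Pi$. The point is that both functors $\MC\big(\Hom(\QC L,-\tensor\Omega_\bullet)\big)$ and $\map\big(\MC_\bullet(L),\MC_\bullet(-)\big)$ carry a fibration sequence of connected nilpotent finite-type dg Lie algebras to a fibration sequence of Kan complexes, compatibly with $\Phi$. For the source, a short exact sequence $\Pi'\to\Pi\to\Pi''$ remains exact after the exact functor $\Hom(\QC L,-)$, and the nerve of a surjection of such convolution dg Lie algebras is a Kan fibration with the expected fibre. For the target, $\MC_\bullet(-)$ sends the sequence of dg Lie algebras to a fibration sequence of spaces in the manner of Theorem \ref{thm:borel} and Tanr\'e's theory, and $\map(\MC_\bullet(L),-)$ preserves fibration sequences. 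Since $\Pi$ is connected, nilpotent, and of finite type, its lower central series provides a finite tower whose layers are abelian dg Lie algebras, so by the five lemma it suffices to treat the abelian case.

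For abelian $\Pi$ both Maurer--Cartan equations become linear and both sides are simplicial abelian groups. On the source the convolution bracket vanishes, so $\MC\big(\Hom(\QC L,\Pi\tensor\Omega_\bullet)\big)$ is the simplicial abelian group of degree $-1$ cycles of the Chevalley--Eilenberg cochain complex $\Hom(\QC L,\Pi)$, whose homotopy groups are the Lie algebra cohomology $H^*(L;\Pi)$. On the target, $\MC_\bullet(\Pi)$ is a product of rational Eilenberg--MacLane spaces and $\pi_*\map(\MC_\bullet(L),\MC_\bullet(\Pi))$ is the cohomology of $\MC_\bullet(L)$ with coefficients in $\Pi$, which under the quasi-isomorphism $A_{PL}(\MC_\bullet(L))\simeq\Hom(\QC L,\QQ)$ is again $H^*(L;\Pi)$. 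One then checks that $\Phi$ realizes the identity on these groups, which closes the base case.

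The hard part will be this last identification: confirming that $\Phi$, produced by the purely formal Quillen adjunction, induces the canonical isomorphism between $H^*(L;\Pi)$ and the cohomology of the mapping space. Closely related, and also requiring care, are the claims that the two functors preserve fibration sequences --- the Kan-ness of the nerves and the behaviour of $\Hom(\QC L,-)$ on short exact sequences --- which is precisely where the nilpotence and finite-type hypotheses on $\Pi$ are indispensable.
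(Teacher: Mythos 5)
Your construction of the comparison map agrees with the paper's, just phrased through the bar adjunction instead of direct evaluation: the paper makes the identifications $\Hom(\QC L,\Pi\tensor\Omega)\cong\Hom_\Omega(\QC_\Omega(L\tensor\Omega),\Pi\tensor\Omega)$ and $\MC(L\tensor\Omega)\cong\GG(\QC_\Omega(L\tensor\Omega))$ and defines $\epsilon(\tau,\xi)=\tau(\xi)$, checking the Maurer--Cartan equation for $\tau(\xi)$ by evaluating $\partial(\tau)+\tfrac{1}{2}[\tau,\tau]=0$ at the group-like $\xi$, then takes the adjoint. Since $\Delta^{(k)}(\xi)=\xi^{\tensor k}$ for group-like $\xi$, your $\hat\tau(e(\beta))$ is exactly $e(\tau(e(\beta)))$, so under Proposition \ref{prop:mc} your map and the paper's coincide. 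Your inductive scheme --- reduce along a tower of $\Pi$ with abelian layers, compare fibration sequences on both sides, and settle the abelian case by identifying both homotopy types with Chevalley--Eilenberg cohomology --- is likewise precisely the strategy the paper invokes, since its proof of the weak equivalence consists of citing \cite[Theorem 6.6]{Berglund} (``induction on a suitable complete filtration of $\Pi$'') and omitting the details.

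The one genuine flaw in your write-up is the claim that the lower central series of $\Pi$ ``provides a finite tower.'' Nilpotence in this paper is only degreewise: for each $n$ there is $k$ with $(\Gamma^k\Pi)_n=0$, but the series need not terminate globally --- already for a simply connected dg Lie algebra that is free on generators in infinitely many degrees one has $\Gamma^k\Pi\neq 0$ for all $k$, even though $(\Gamma^k\Pi)_n=0$ for $k>n$. So a finite tower plus the five lemma does not close the argument; you must run the induction through the finite stages and then pass to the inverse limit, controlling it with a Milnor $\varprojlim$--$\varprojlim^1$ sequence on homotopy groups (which converges here because the tower is degreewise eventually constant). This is exactly what the word ``complete'' is doing in the paper's filtration $F^{r+1}=\Hom(\QC L,\Pi\langle r\rangle)$ and in the identification $\widehat{\MC}_\bullet(\Hom(\QC L,\Pi))\cong\varprojlim\MC_\bullet\Hom(\QC L,\Pi/\Pi\langle r\rangle)$ in the Remark following the theorem; note also that the finite-type hypothesis is what permits $\Hom(\QC L,\Pi/\Pi\langle r\rangle)\tensor\Omega_\bullet\cong\Hom(\QC L,\Pi/\Pi\langle r\rangle\tensor\Omega_\bullet)$ on the finite-dimensional quotients, and that nilpotence is what lets one refine the connected-cover tower into central extensions with abelian layers and guarantees the nerves are Kan and the nerves of surjections are Kan fibrations. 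With that correction, your outline matches the intended proof; the remaining verifications you flag (that $\Phi$ induces the canonical isomorphism in the abelian base case, and the fibration-sequence compatibilities) are indeed where the work lies, and they constitute the content of the cited \cite[Theorem 6.6]{Berglund}.
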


\begin{proof}
Let $\Omega$ be a bounded commutative cochain algebra. We define a natural map
\begin{equation} \label{eq:map}
\MC \Hom(\QC(L),\Pi\tensor \Omega) \times \MC(L\tensor \Omega) \to \MC(\Pi \tensor \Omega)
\end{equation}
as follows. First, make the identifications
\begin{align*}
\Hom(\QC(L),\Pi\tensor \Omega) & = \Hom_\Omega(\QC_\Omega(L\tensor \Omega),\Pi \tensor \Omega), \\
\MC(L\tensor \Omega) & = \GG \big(\QC_\Omega(L\tensor \Omega)\big),
\end{align*}
the second of which is justified by Proposition \ref{prop:mc}, and then define
$$\epsilon\colon \MC\Hom_\Omega(\QC_\Omega(L\tensor \Omega),\Pi\tensor \Omega) \times \GG \big(\QC_\Omega(L\tensor \Omega)\big) \to \MC(\Pi\tensor \Omega),$$
simply by evaluation,
$$\epsilon(\tau,\xi) = \tau(\xi).$$
We need to verify that $\tau(\xi)$ satisfies the Maurer-Cartan equation.
Since $\tau$ is a twisting function, it satisfies the equation
$$0 = \partial(\tau) +\frac{1}{2}[\tau,\tau].$$
Evaluating both sides at the group-like element $\xi$ yields
$$0 = d\tau(\xi) +\tau d(\xi) + \frac{1}{2}\ell \circ (\tau\tensor \tau) \circ \Delta(\xi) = d\tau(\xi) + \frac{1}{2} [\tau(\xi),\tau(\xi)],$$
showing that $\tau(\xi)$ satisfies the Maurer-Cartan equation.

The map is clearly natural in $\Omega$ and yields a simplicial map
$$\MC \Hom(\QC(L),\Pi\tensor \Omega_\bullet) \times \MC(L\tensor \Omega_\bullet) \to \MC(\Pi \tensor \Omega_\bullet).$$
The map in the theorem is defined to be the adjoint of this map.

To show it is a weak homotopy equivalence, one argues as in \cite[Theorem 6.6]{Berglund} by induction on a suitable complete filtration of $\Pi$. The proof is entirely analogous so we omit the details.
\end{proof}

\begin{remark}
The dg Lie algebra $\Hom(\QC L,\Pi)$ with the descending filtration
$$F^{r+1} = \Hom(\QC L, \Pi\langle r\rangle), \quad r\geq 0,$$
is a complete dg Lie algebra in the sense of \cite[Definition 5.1]{Berglund}.
By \cite[Theorem 6.3]{Berglund} (see also Definition 5.3 and Remark 6.4 in \emph{loc.cit.}), the Kan complex
\begin{equation*}
\widehat{\MC}_\bullet(\Hom(\QC L,\Pi)) = \varprojlim  \MC_\bullet \Hom(\QC L,\Pi/\Pi\langle r\rangle)
\end{equation*}
is homotopy equivalent to $\map(\MC_\bullet(L),\MC_\bullet(\Pi))$. We would like to remark how this relates to the statement in Theorem \ref{thm:mapping space}.

Since $\Pi/\Pi\langle r\rangle$ is finite dimensional for all $r$, we have
$$\Hom(\QC L,\Pi/\Pi\langle r\rangle)\tensor \Omega_\bullet \cong \Hom(\QC L,\Pi/\Pi\langle r\rangle \tensor \Omega_\bullet)$$
Upon taking the inverse limit, we get an isomorphism of simplicial sets
$$\widehat{\MC}_\bullet(\Hom(\QC L,\Pi)) \cong \MC_\bullet(\Hom(\QC L,\Pi\tensor \Omega_\bullet)).$$
Thus, Theorem \ref{thm:mapping space} and Theorem 6.3 in \cite{Berglund} say the same thing.
The advantage of Theorem \ref{thm:mapping space} is that the explicit formula for the map gives us control over equivariance properties, as we will see next.
\end{remark}

Let $L$ be a simply connected cofibrant dg Lie algebra of finite type. Precomposition defines a right action of the dg Lie algebra $\Coder \QC(L)$ on the complete dg Lie algebra $\Hom(\QC(L),\Pi)$. By composing with \eqref{eq:chi}, and restricting to the simply connected cover, we get an action of the dg Lie algebra
$$\gl = \big( \Der L \ltimes_{\ad} sL \big)\langle 1 \rangle$$
on $\Hom(\QC(L),\Pi)$. By Theorem \ref{thm:borel}, this induces an action of the simplicial group $\exp_\bullet(\gl)$ on the simplicial set $\MC\Hom(\QC(L),\Pi\tensor\Omega_\bullet) \cong \widehat{\MC}_\bullet(\Hom(\QC(L),\Pi))$.
On the other hand, $\exp_\bullet(\gl)$ acts on $\MC_\bullet(L)$ and hence also on $\map(\MC_\bullet(L),\MC_\bullet(\Pi))$.
The following is an important addendum to Theorem \ref{thm:mapping space}.
\begin{proposition} \label{prop:equivariance}
The weak equivalence of Theorem \ref{thm:mapping space},
$$\MC\Hom(\QC L,\Pi\tensor\Omega_\bullet) \xrightarrow{\sim} \map(\MC_\bullet(L),\MC_\bullet(\Pi)),$$
is equivariant with respect to the action of the simplicial group $\exp_\bullet(\gl)$.
\end{proposition}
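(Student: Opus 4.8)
The plan is to reduce the statement to the explicit evaluation formula $\epsilon(\tau,\xi) = \tau(\xi)$ underlying Theorem \ref{thm:mapping space}, and to observe that both $\exp_\bullet(\gl)$-actions in sight are, at the level of this formula, nothing but precomposition with the single action of $\exp_\bullet(\gl)$ on $\QC L$. Since spatial realization preserves products and all the maps involved are natural in the test algebra, it suffices to fix a bounded commutative cochain algebra $\Omega = \Omega_n$ and check equivariance for the unadjointed pairing
$$\epsilon\colon \MC\Hom_\Omega(\QC_\Omega(L\tensor\Omega),\Pi\tensor\Omega)\times\GG(\QC_\Omega(L\tensor\Omega)) \to \MC(\Pi\tensor\Omega);$$
passing to adjoints and assembling over the simplicial degrees then yields the proposition.

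First I would pin down the two actions that do not require work. By Theorem \ref{thm:borel}, the action of $g\in\exp_\bullet(\gl)\cong\langle U\gl\rangle = \GG(U\gl\tensor\Omega)$ on $\MC_\bullet(L)\cong\GG(\QC_\Omega(L\tensor\Omega))$ is given by the right $U\gl$-module coalgebra structure on $\QC L$ extending the coderivation action \eqref{eq:chi}; write this as $\xi\mapsto\xi\cdot g$. The induced action on $\map(\MC_\bullet L,\MC_\bullet\Pi)$ is then precomposition in the source variable, $(\phi\cdot g)(\xi) = \phi(\xi\cdot g)$, the target $\MC_\bullet\Pi$ carrying the trivial action since $\gl$ acts only through $\QC L$ and not on $\Pi$.

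The key step, and the one requiring genuine care, is to identify the action on $\MC\Hom(\QC L,\Pi\tensor\Omega_\bullet)$ defined abstractly through Theorem \ref{thm:borel} with a concrete precomposition formula. The $\gl$-action on $\Hom(\QC L,\Pi)$ is the right Lie action $\tau\cdot x = \tau\circ\chi(x)$, and since $\chi(x)$ is a coderivation this is an action by derivations of the convolution bracket $[f,g]=\ell\circ(f\tensor g)\circ\Delta$. Hence it equips $\QC(\Hom(\QC L,\Pi))$ with a $U\gl$-module coalgebra structure and, via Theorem \ref{thm:borel}, induces the $\exp_\bullet(\gl)$-action. I would verify that this exponentiated action is precisely precomposition with the action of $g$ on $\QC L$, namely
$$(\tau\cdot g)(\xi) = \tau(\xi\cdot g),$$
by tracking that $\chi$ extends to an algebra map $U\gl\to\mathrm{End}(\QC L)$ carrying the group-like $g$ to the very operator $\xi\mapsto\xi\cdot g$ used on the domain $\MC_\bullet L$. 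This coincidence of operators is the crux: both the domain and the mapping space are acted on by precomposition with one and the same map on $\QC L$.

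Granting this, the conclusion is immediate. For group-like $g$ and arbitrary $\tau,\xi$ one computes
$$\epsilon(\tau\cdot g,\xi) = (\tau\cdot g)(\xi) = \tau(\xi\cdot g) = \epsilon(\tau,\xi\cdot g).$$
Writing $\Phi$ for the equivalence of Theorem \ref{thm:mapping space}, $\Phi(\tau)(\xi) = \tau(\xi)$, this gives $\Phi(\tau\cdot g)(\xi) = \tau(\xi\cdot g) = \Phi(\tau)(\xi\cdot g) = (\Phi(\tau)\cdot g)(\xi)$, so $\Phi(\tau\cdot g) = \Phi(\tau)\cdot g$, which is the asserted equivariance. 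I expect the only real obstacle to be the bookkeeping of the previous paragraph, matching the Borel-construction action with literal precomposition and confirming that no antipode or inverse intervenes. This is moreover robust: whatever convention fixes the operator $\xi\mapsto\xi\cdot g$ on $\QC L$, the \emph{same} operator governs both actions, so the displayed chain of equalities holds verbatim and the equivariance is insensitive to the sign and inverse conventions chosen for the realization.
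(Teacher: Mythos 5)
Your proposal is correct and is essentially the paper's own argument: the paper's entire proof is the observation that the evaluation pairing $\epsilon$ from the proof of Theorem \ref{thm:mapping space} satisfies $\epsilon(\theta\ldotp f,\xi)=\epsilon(f,\xi\ldotp\theta)$ for group-like $\theta\in \GG U_{\Omega_\bullet}(\gl\tensor\Omega_\bullet)$, which is exactly your chain of equalities, with your identification of the Borel-construction action as precomposition through the $U\gl$-module coalgebra structure on $\QC(L)$ being the same point the paper leaves as ``easily checked.'' The only cosmetic difference is bookkeeping of sides (the paper writes the action on $f$ on the left and on $\xi$ on the right), which, as you note, does not affect the argument.
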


\begin{proof}
The proof boils down to the easily checked fact that the map $\epsilon$ in the proof of Theorem \ref{thm:mapping space} satisfies
$$\epsilon(\theta\ldotp f,\xi) = \epsilon(f,\xi\ldotp \theta),$$
for $\theta\in\GG U_{\Omega_\bullet}(\gl\tensor \Omega_\bullet)$, $f\in \MC\Hom_\Omega(\QC_\Omega(L\tensor \Omega),\Pi\tensor \Omega)$ and $\xi\in\GG \big(\QC_\Omega(L\tensor \Omega)\big)$.
\end{proof}

\begin{proposition} \label{prop:bar construction}
Let $X_\QQ$ and $Z_\QQ$ be $\QQ$-local connected nilpotent spaces of finite $\QQ$-type. Let $L$ be a finite type cofibrant dg Lie algebra model for $X_\QQ$ and let $\Pi$ be any dg Lie model for $Z_\QQ$. The geometric bar construction,
$$B\big(\map(X_\QQ,Z_\QQ),\aut_\circ(X_\QQ),*\big),$$
is weakly homotopy equivalent to the geometric realization of the dg Lie algebra
$$\Hom(\QC L,\Pi) \rtimes \big( \Der L\ltimes_{\ad} sL\big)\langle 1 \rangle.$$
\end{proposition}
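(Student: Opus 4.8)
The plan is to assemble the equivalence from the three ingredients established above: the identification of $\aut_\circ(X_\QQ)$ with a topological group built from $L$, the dg Lie model for the mapping space, and the Borel-construction description of a semi-direct product. As a preliminary reduction, since $Z_\QQ$ is connected nilpotent of finite $\QQ$-type, I would first replace the given $\Pi$ by a quasi-isomorphic nilpotent dg Lie algebra of finite type; as all the constructions below are homotopy invariant, this is harmless and lets me invoke Theorem \ref{thm:mapping space}. Write $\gl = \big(\Der L \ltimes_{\ad} sL\big)\langle 1 \rangle$ and $M = \Hom(\QC L,\Pi)$, and set $G = |\exp_\bullet(\gl)|$. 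Since $L$ is a cofibrant dg Lie model for $X_\QQ$ one has $X_\QQ \sim |\MC_\bullet(L)|$, and likewise $Z_\QQ \sim |\MC_\bullet(\Pi)|$.

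Next I would identify the monoid and the mapping space. By the discussion culminating in \eqref{eq:aut}, the map $G \to \aut_\circ(X_\QQ)$ is a weak equivalence of grouplike monoids. By Theorem \ref{thm:mapping space} together with the subsequent remark, the completed nerve $\widehat{\MC}_\bullet(\Hom(\QC L,\Pi)) \cong \MC_\bullet(\Hom(\QC L,\Pi\tensor\Omega_\bullet))$ realizes to a space weakly equivalent to $\map(X_\QQ,Z_\QQ)$, and by Proposition \ref{prop:equivariance} this equivalence is $\exp_\bullet(\gl)$-equivariant, the action on the target being induced by precomposition with the $G$-action on $X_\QQ = |\MC_\bullet(L)|$. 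Under $G \sim \aut_\circ(X_\QQ)$ this is precisely the precomposition action defining the bar construction.

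I would then pass to the Borel construction and apply the coalgebra-bundle isomorphisms. A standard property of the two-sided bar construction gives $B\big(\map(X_\QQ,Z_\QQ),\aut_\circ(X_\QQ),*\big) \sim \map(X_\QQ,Z_\QQ) \times_G EG$, so by the equivariant identifications of the previous step the bar construction is weakly equivalent to the realization of $\widehat{\MC}_\bullet(M) \times_{\exp_\bullet(\gl)} E\exp_\bullet(\gl)$. On the other hand, the outer action of $\gl$ on $M$ obtained from $\chi$ in \eqref{eq:chi} with $\tau = 0$ (so $\xi$ vanishes and the semi-direct product is untwisted) makes $\QC(M)$ a $U\gl$-module coalgebra, and the coalgebra-bundle isomorphisms from the proof of Theorem \ref{thm:borel}, namely $\QC(M\rtimes\gl) \cong \QC(M) \tensor_{U\gl}\QC(U\gl;\gl)$, realize to $\MC_\bullet(M) \times_{\exp_\bullet(\gl)} E\exp_\bullet(\gl)$. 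Combining these identifications shows that $|\MC_\bullet(M\rtimes\gl)|$ is weakly equivalent to $\map(X_\QQ,Z_\QQ)\times_G EG$, which is what we want.

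The hard part will be that $M = \Hom(\QC L,\Pi)$ is \emph{not} connected: it has nonzero negative-degree part, so the convergence argument in Proposition \ref{prop:mc} and the hypotheses of Theorem \ref{thm:borel} do not apply verbatim. I would circumvent this exactly as in the remark following Theorem \ref{thm:mapping space}, filtering $\Pi$ by its connected covers $\Pi\langle r\rangle$, applying the coalgebra-bundle identification to each finite-dimensional quotient $\Hom(\QC L,\Pi/\Pi\langle r\rangle)$, and passing to the inverse limit, using that the completed nerve is the limit of the nerves of the quotients. The one genuinely technical verification is that the $G$-action, the Borel construction, and geometric realization are all compatible with this inverse limit; everything else is a routine bookkeeping of the equivalences already proved.
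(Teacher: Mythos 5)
Your main line reproduces the paper's own proof. The author likewise takes $X_\QQ = \MC_\bullet(L)$ and $Z_\QQ = \MC_\bullet(\Pi)$, quotes Theorem \ref{thm:borel} to obtain $\widehat{\MC}_\bullet\big(\Hom(\QC L,\Pi)\rtimes\gl\big) \sim B\big(\widehat{\MC}_\bullet(\Hom(\QC L,\Pi)),\exp_\bullet(\gl),*\big)$, and then combines the weak equivalence \eqref{eq:aut} of group-like monoids with the equivariant equivalence of Proposition \ref{prop:equivariance} to replace the fiber and the group by $\map(X_\QQ,Z_\QQ)$ and $\aut_\circ(X_\QQ)$. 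Your rewriting of $B(Y,G,*)$ as $Y\times_G EG$ is the same move, not a different route, and your preliminary replacement of $\Pi$ by a nilpotent finite-type model is a point the paper leaves implicit (Theorem \ref{thm:mapping space} needs those hypotheses while the proposition allows \emph{any} model); making it explicit is a genuine improvement.

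The one place where you overclaim is the final paragraph. Passing to the quotients $\Pi/\Pi\langle r\rangle$ does \emph{not} restore the hypotheses of Proposition \ref{prop:mc} or Theorem \ref{thm:borel}: the convolution algebra $M_r = \Hom(\QC L,\Pi/\Pi\langle r\rangle)$ is nilpotent (since $\Pi/\Pi\langle r\rangle$ is bounded and nilpotent), but it is still not connected --- $\QC L$ is unbounded above, so $M_r$ has nonzero components in arbitrarily negative degrees --- and it is certainly not finite-dimensional, only the target is. The convergence failure in Proposition \ref{prop:mc} is caused by the \emph{source} algebra: for a Maurer-Cartan element $\tau$ with a component in $(M_r)_{-1}\tensor\Omega^0$, the powers $s\tau^{\wedge_\Omega k}$ never vanish in the uncompleted symmetric coalgebra, no matter how truncated $\Pi/\Pi\langle r\rangle$ is. So ``apply the coalgebra-bundle identification to each quotient and pass to the limit'' fails at every finite stage as literally stated; what is actually needed is an extension of Theorem \ref{thm:borel} beyond connected fibers, e.g.\ via a completed Chevalley-Eilenberg coalgebra in the style of Hinich, or a direct fibration argument for $\MC_\bullet(M\rtimes\gl)\to\MC_\bullet(\gl)$ with nilpotent fiber. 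In fairness, the paper elides exactly the same point --- it invokes Theorem \ref{thm:borel}, whose hypotheses require a connected fiber, directly for $\widehat{\MC}_\bullet$ of the non-connected algebra $\Hom(\QC L,\Pi)$ --- so your instinct that this is the hard part is correct, but your sketched resolution does not close it.
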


\begin{proof}
We may as well assume $X_\QQ = \MC_\bullet(L)$ and $Z_\QQ = \MC_\bullet(\Pi)$.
By Theorem \ref{thm:borel}, there is a weak homotopy equivalence
$$\widehat{\MC}_\bullet\big(\Hom(\QC L,\Pi) \rtimes \gl \big) \sim B\big(\widehat{\MC}_\bullet\big(\Hom(\QC L,\Pi)\big),\exp_\bullet(\gl),*\big).$$
The weak equivalence $\exp_\bullet(\gl) \to \aut_\circ(X)$ of group-like simplicial monoids and the weak equivalence of $\exp_\bullet(\gl)$-spaces of Proposition \ref{prop:equivariance} combine to give a weak homotopy equivalence
$$B\big(\widehat{\MC}_\bullet\big(\Hom(\QC L,\Pi)\big),\exp_\bullet(\gl),*\big) \xrightarrow{\sim} B\big(\map(X_\QQ,Z_\QQ),\aut_\circ(X_\QQ),*\big).$$
\end{proof}

\subsection{Proof of the main result}

\begin{theorem} \label{thm:main}
Suppose that $\FF$ is a category of fibers such that the classifying space $B_\infty$ is connected and nilpotent.
Let $p\colon E\to X$ be an $\FF$-fibration over a simply connected finite CW-complex $X$.

Let $L$ be a simply connected cofibrant dg Lie algebra model for $X$ and let $\Pi$ be a connected nilpotent dg Lie algebra model for $B_\infty$. Let $\tau\colon \QC L \to \Pi$ be a twisting function that models the map $\nu \colon X\to B_\infty$ that classifies $p$.

Then the classifying space $B\aut_{\circ}^\FF(p)$ is rationally homotopy equivalent to the geometric realization of the dg Lie algebra
$$\Hom^\tau(\QC L,\Pi)\langle 0\rangle \rtimes_{\tau_*} \big( \Der L\ltimes_{\ad} sL \big)\langle 1\rangle.$$
\end{theorem}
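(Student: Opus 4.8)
The plan is to chain the structural results above so as to reduce $B\aut_\circ^\FF(p)$ to an algebraic model, and then to cut out the correct path component by a twisting construction. First I would apply Corollary~\ref{cor:bar model} to replace $B\aut_\circ^\FF(p)$ by the geometric bar construction $B\big(\map(X,B_\infty)_\nu,\aut_\circ(X),*\big)$. Since $X$ is a simply connected finite CW-complex and $B_\infty$ is connected and nilpotent, Lemma~\ref{lemma:rationalization} (applied with $Z=B_\infty$) shows this is rationally homotopy equivalent to $B\big(\map(X_\QQ,(B_\infty)_\QQ)_{\nu_\QQ},\aut_\circ(X_\QQ),*\big)$. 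From here the problem is purely one of $\QQ$-local spaces, and the dg Lie models $L$ and $\Pi$ are available.

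Next I would invoke Proposition~\ref{prop:bar construction}, with $Z_\QQ=(B_\infty)_\QQ$, to identify the bar construction built from the \emph{full} mapping space, $B\big(\map(X_\QQ,Z_\QQ),\aut_\circ(X_\QQ),*\big)$, with the realization of $\Hom(\QC L,\Pi)\rtimes\gl$, where $\gl=(\Der L\ltimes_{\ad}sL)\langle 1\rangle$. The discrepancy with the previous step---full mapping space versus the component of $\nu$---is harmless: as $\gl$ is simply connected, $\exp_\bullet(\gl)$ is reduced, so $\aut_\circ(X_\QQ)\simeq|\exp_\bullet(\gl)|$ is connected and preserves every path component of $\map(X_\QQ,Z_\QQ)$. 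Hence $\map(X_\QQ,Z_\QQ)=\coprod_c \map(X_\QQ,Z_\QQ)_c$ as $\aut_\circ(X_\QQ)$-spaces, the bar construction splits accordingly, and the summand we want is the one indexed by $\nu$.

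It remains to produce a dg Lie algebra whose realization is this summand. Under the identification $\MC(\Hom(\QC L,\Pi)\tensor\Omega_\bullet)\cong\map(\MC_\bullet L,\MC_\bullet\Pi)$ of Theorem~\ref{thm:mapping space}, path components correspond to gauge classes of twisting functions, and the component of $\nu$ is represented by $\tau$. Twisting the dg Lie algebra $\Hom(\QC L,\Pi)\rtimes\gl$ by the Maurer--Cartan element $(\tau,0)$ re-bases its realization at $\tau$: the translation $\mu\mapsto\tau+\mu$ identifies the Maurer--Cartan sets and sends $0$ to $\tau$. Pulling back the identity~\eqref{eq:twist} along $\chi\colon\gl\to\Coder\QC L$ then yields
$$\big(\Hom(\QC L,\Pi)\rtimes\gl\big)^\tau\cong\Hom^\tau(\QC L,\Pi)\rtimes_{\tau_*}\gl,$$
with $\tau_*$ the map $x\mapsto\tau\circ\chi(x)$. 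Finally, passing to the $0$-connected cover of the fiber factor selects the component of the new basepoint, giving the asserted model $\Hom^\tau(\QC L,\Pi)\langle 0\rangle\rtimes_{\tau_*}(\Der L\ltimes_{\ad}sL)\langle 1\rangle$.

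The step I expect to be the main obstacle is this last one: verifying that replacing the fiber factor by $\Hom^\tau(\QC L,\Pi)\langle 0\rangle$ really models the single path component of the rebased realization, compatibly with the $\gl$-action and hence with the whole fibration sequence of the bar construction. Concretely, one must check that $\Hom^\tau(\QC L,\Pi)\langle 0\rangle$ is stable under the outer $\gl$-action and that the $\tau_*$-twist lands in it---both consequences of $\gl$ being concentrated in degrees $\geq 1$---and one must argue, using the complete nilpotent structure of $\Hom(\QC L,\Pi)$ recorded after Theorem~\ref{thm:mapping space}, that on realizations the $0$-connected cover corresponds to the basepoint component. A pleasant feature is that this cover simultaneously makes the fiber factor connected, so that the clean form of Theorem~\ref{thm:borel} applies and exhibits the realization as exactly the $\nu$-summand $B\big(\map(X_\QQ,Z_\QQ)_\nu,\aut_\circ(X_\QQ),*\big)$. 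Assembling this with the equivalences of the first two steps identifies $B\aut_\circ^\FF(p)$ rationally with $|\Hom^\tau(\QC L,\Pi)\langle 0\rangle\rtimes_{\tau_*}(\Der L\ltimes_{\ad}sL)\langle 1\rangle|$, as claimed.
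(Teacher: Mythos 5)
Your proposal is correct and follows essentially the same route as the paper's proof: Corollary~\ref{cor:bar model} plus Lemma~\ref{lemma:rationalization}, then Proposition~\ref{prop:bar construction}, then selecting the component of $\tau$ and rewriting the twisted algebra via \eqref{eq:twist} pulled back along $\chi$. The one step you flag as the main obstacle---that twisting by the Maurer--Cartan element $(\tau,0)$ and passing to the $0$-connected cover models the basepoint component of $\widehat{\MC}_\bullet$---is precisely what the paper outsources to \cite[Corollary 1.3]{Berglund}, and your sketch of it (translation of Maurer--Cartan sets, stability of $\Hom^\tau(\QC L,\Pi)\langle 0\rangle$ under the $\gl$-action since $\gl$ is concentrated in degrees $\geq 1$) is sound.
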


\begin{proof}
For notational convenience, let $Z = B_\infty$. As before, let
$$\gl = \big( \Der L \ltimes_{\ad} sL \big)\langle 1 \rangle.$$
That the dg Lie algebras $L$ and $\Pi$ are models for $X$ and $Z$ means that we may use their geometric realizations as models for the $\QQ$-localizations of $X$ and $Z$;
$$X_\QQ = |\MC_\bullet(L)|,\quad Z_\QQ =|\MC_\bullet(\Pi)|.$$

By Corollary \ref{cor:bar model} and Lemma \ref{lemma:rationalization} we have
$$B\aut_\circ^\FF(p) \sim B\big(\map(X,Z)_\nu,\aut_\circ(X),*\big) \sim_\QQ B\big(\map(X_\QQ,Z_\QQ)_{\nu_\QQ},\aut_\circ(X_\QQ),*\big).$$
The latter space is weakly homotopy equivalent to the component
$$B\big(\map(X_\QQ,Z_\QQ),\aut_\circ(X_\QQ),*\big)_{\nu_\QQ}.$$
By Proposition \ref{prop:bar construction}
$$B\big(\map(X_\QQ,Z_\QQ),\aut_\circ(X_\QQ),*\big) \sim \widehat{\MC}\big(\Hom(\QC L,\Pi)\rtimes \gl\big).$$
Let $\tau\colon \QC L \to \Pi$ be a twisting function that corresponds to $\nu_\QQ$. It follows from \cite[Corollary 1.3]{Berglund} that the component
$$\widehat{\MC}_\bullet\big(\Hom(\QC L,\Pi)\rtimes \gl\big)_\tau \sim \widehat{\MC}_\bullet\big((\Hom(\QC L,\Pi)\rtimes \gl)^\tau\langle 0 \rangle \big).$$
Finally, as in \eqref{eq:twist} one checks that there is an isomorphism of dg Lie algebras
$$(\Hom(\QC L,\Pi)\rtimes \gl)^\tau\langle 0 \rangle \cong \Hom^\tau(\QC L,\Pi)\langle 0 \rangle \rtimes_{\tau_*} \gl.$$
This finishes the proof.
\end{proof}

\begin{remark}
It is straightforward to derive the following variants of the main result:
If $A\subseteq X$ is a subspace such that the inclusion of $A$ into $X$ is a cofibration, then we may consider the submonoid $\aut_{\circ,A}^\FF(p) \subseteq \aut_{\circ}^\FF(p)$ where the homotopy automorphism of the base restricts to the identity map on $A$. If
$$\gl_A \to \big( \Der L \ltimes_{\ad} sL \big)\langle 1 \rangle$$
is a dg Lie algebra morphism that models the map $B\aut_{A,\circ}(X) \to B\aut_\circ(X)$, then
$$\Hom^\tau(\QC L,\Pi)\langle 0\rangle  \rtimes_{\tau_*} \gl_A$$
is a dg Lie algebra model for the space $B\aut_{A,\circ}^\FF(p)$.
Similarly, if we pick a base-point $*\in A\subseteq X$, and let $\aut_{A,\circ}^*(p)$ denote the submonoid of $\aut_{A,\circ}^\FF(p)$ where the automorphism of the total space restricts to the identity over the base-point, then one gets a model for $B\aut_{A,\circ}^*(p)$ by replacing $\QC(L)$ with the \emph{reduced} Chevalley-Eilenberg chains.
\end{remark}

\section{Examples and applications} \label{sec:applications}
Many classifying spaces of interest in geometry have simple rational homotopy types:

\begin{itemize}
\item If $G$ is a compact connected Lie group, then $H^*(BG;\QQ)$ is a polynomial algebra on finitely many generators of even degree (see, e.g., \cite[Theorem 1.81]{FOT}).

\item The stable classifying spaces $BO$, $BTop$, $BPL$ are infinite loop spaces and have rational cohomology rings of the form $\QQ[p_1,p_2,\ldots ]$, where $p_i$ is a generator of degree $4i$ (see, e.g., \cite{MM}).

\item Halperin's conjecture, which has been verified in many cases, asserts that $H^*(B\aut_\circ(X);\QQ)$ is a polynomial algebra whenever $X$ is an elliptic space with non-zero Euler characteristic.
\end{itemize}

In this section, we will provide a simplification of the model arrived at in the previous section in the case when $H^*(B_\infty;\QQ)$ is a polynomial algebra with finitely many generators in each degree.

Call the generators $p_1,p_2,\ldots$ and let $d_i = |p_i|$.
Let $X$ be a simply connected finite CW complex together with an $\FF$-bundle classified by a map
$$\nu \colon X\to B_\infty.$$
The characteristic classes of the bundle are defined by pulling back the universal classes $p_i$ along $\nu$;
$$p_i(\nu) = \nu^*(p_i) \in H^{d_i}(X;\QQ).$$
A dg Lie algebra model for $B_\infty$ is provided by the abelian dg Lie algebra with zero differential
$$\Pi = \pi_*(\Omega B_\infty)\tensor \QQ.$$
This graded vector space is spanned by classes $\pi_i\in \pi_{d_i-1}(\Omega B_\infty)\tensor \QQ = \pi_{d_i}(\Omega B_\infty)\tensor \QQ$ that are dual to $p_i$ under the Hurewicz pairing between cohomology and homotopy.

Let $L$ denote the minimal Quillen model of $X$. Recall that it has the form
$$L  = \big(\LL(V),\delta\big),$$
where $V = s^{-1}\widetilde{H}_*(X;\QQ)$ and the differential $\delta$ is decomposable in the sense that $\delta(L) \subseteq [L,L]$. Thus, the suspension of the space of indecomposables, $sL/[L,L]$, may be identified with $\widetilde{H}_*(X;\QQ)$.

We will work with based maps, so in this section we let $\rQC(L)$ denote the \emph{reduced} Chevalley-Eilenberg chains on a dg Lie algebra $L$. It is defined as the cokernel of the coaugmentation $\eta\colon \QQ\to \QC(L)$.

The restriction of \eqref{eq:chi} to $\Der L$ gives a morphism of dg Lie algebras
$$\chi\colon \Der L \to \Coder \rQC(L).$$
In particular, $\rQC(L)$ is a module over $\Der L$.
Moreover, the universal twisting function $\tau_L\colon \rQC(L) \to L$ is a morphism of $\Der L$-modules.
Indeed, for $\theta\in \Der L$, the coderivation $\chi(\theta) \in \Coder \rQC(L)$ is characterized by commutativity of the diagram
$$
\xymatrix{\rQC(L) \ar[d]^-{\tau_L} \ar[r]^-{\chi(\theta)} & \rQC(L) \ar[d]^-{\tau_L} \\ L \ar[r]^-\theta & L,}
$$
which means that $\tau_L$ is a morphism of $\Der L$-modules.

Consider the composite of the universal twisting function and the abelianization,
$$\rQC(L) \xrightarrow{\tau_L} L \xrightarrow{a} L/[L,L].$$
Since $a$ is a morphism of dg Lie algebras, this composite is a twisting function. But a twisting function with abelian target is the same thing as a chain map of degree $-1$. Thus, for every dg Lie algebra $L$, there is a canonical chain map
$$
g_L\colon \rQC(L) \to sL/[L,L].
$$
Moreover, $g_L$ is a morphism of $\Der L$-modules. Indeed, we have just seen that $\tau_L$ is a morphism of $\Der L$-modules, and $a\colon L\to L/[L,L]$ is obviously a morphism of $\Der L$-modules.

\begin{proposition} \label{prop:Q}
If $L$ is a cofibrant dg Lie algebra, then the canonical map
$$
g_L\colon \rQC(L) \to sL/[L,L]
$$
is a quasi-isomorphism.
\end{proposition}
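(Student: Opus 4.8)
The plan is to reduce the statement, via the naturality of $g$, first to a quasi-free Lie algebra and then to a free Lie algebra with zero differential, where it becomes the classical fact that the tensor algebra has global dimension one; the passage from zero differential to a general one is then carried out by a weight spectral sequence.

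First I would reduce to the quasi-free case. Every cofibrant dg Lie algebra is a retract of a quasi-free one $(\LL(V),\delta)$, and the constructions $\rQC(-)$, $s(-)/[-,-]$ and the comparison map $g$ are all natural in $L$ (recall that $g_L = a\circ \tau_L$ is built from the universal twisting function $\tau_L$ and the abelianization $a$). Hence $g_L$ is a retract, in the arrow category, of $g_{(\LL(V),\delta)}$, and since a retract of a quasi-isomorphism is a quasi-isomorphism, it suffices to treat $L=(\LL(V),\delta)$. In this situation the abelianization is $L/[L,L]=(V,\delta_1)$, where $\delta_1$ is the linear part of $\delta$, so the target is $s(V,\delta_1)$ and $g_L$ is the projection of $\rQC(\LL(V))=\overline{S}(sL)$ onto its length-one summand $sL=s\LL(V)$ followed by the quotient $s\LL(V)\to sV$.

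The key computational input is the free case with $\delta=0$. Here $\rQC(\LL(V))$ computes the reduced part of $\operatorname{Tor}^{U\LL(V)}(\QQ,\QQ)=\operatorname{Tor}^{T(V)}(\QQ,\QQ)$, which is concentrated in homological degrees $0$ and $1$ because $T(V)$ has global dimension one; the reduced homology is therefore $sV$, sitting in symmetric length one and realized precisely by $g$. I would cite this standard computation (e.g.\ \cite{FHT-RHT}, \cite{Tanre}). To incorporate the differential I filter $\rQC(\LL(V),\delta)$ by total word-length (weight) in $V$. The bracket part of the Chevalley--Eilenberg differential preserves weight, while the internal part induced by $\delta$ splits as the weight-preserving $\delta_1$ plus the weight-raising decomposable part $\delta_{\geq 2}$; thus $E^0\cong \rQC(\LL(V),\delta_1)$. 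I compute its homology through the auxiliary length filtration: the length-preserving piece is the factorwise $\delta_1$, and since over $\QQ$ the functors $S$ and $\LL$ commute with homology — using $\LL(HW)=H(\LL W)$, valid because $\LL_n$ is a natural direct summand of $(-)^{\tensor n}$ — this reduces to the zero-differential free case applied to $H(V,\delta_1)$. The outcome is $H(\rQC(\LL(V),\delta_1))\cong sH(V,\delta_1)$, concentrated simultaneously in length one and weight one.

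Consequently $E^1\cong sH(V,\delta_1)=H(sL/[L,L])$ is concentrated in weight one, so every higher differential $d_r$ ($r\geq 1$), which raises weight by $r$, vanishes for degree reasons; the spectral sequence collapses at $E^1$ and $g_L$ induces the asserted isomorphism on homology. Convergence is unproblematic: under the standing connectivity hypothesis each total degree involves only finitely many weights (a weight-$w$, length-$k$ monomial has degree at least $w+k$), and for the general cofibrant case one argues along the exhaustive cellular filtration instead. The main obstacle is the free Lie algebra computation — the global-dimension-one input together with the commutation $\LL(HW)=H(\LL W)$ — and, secondarily, the double-filtration bookkeeping needed to verify that the homology lands in length one \emph{and} weight one at once, so that no higher differential can act and the edge homomorphism of the spectral sequence is exactly $g_L$.
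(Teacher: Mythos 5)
Your argument is correct, but it is a genuinely different (much more self-contained) route than the paper's, whose entire proof is a citation of \cite[Proposition 22.8]{FHT-RHT} --- a statement formulated for free chain Lie algebras $(\LL(V),\delta)$. What you add is real: since the cited result only covers the quasi-free case, your naturality-plus-retract reduction from an arbitrary cofibrant $L$ to $(\LL(V),\delta)$ is a step the paper's citation implicitly needs but does not spell out, and your two-filtration computation (the decreasing weight filtration isolating $\delta_1$, then the length filtration together with $\LL(HW)=H(\LL W)$ reducing to $\operatorname{Tor}^{T(V)}(\QQ,\QQ)$, which is $\QQ\oplus sV$ because $T(V)$ has global dimension one) is a complete and standard proof of the quasi-free case, with the edge-homomorphism identification correctly pinning down that the isomorphism is induced by $g_L$ itself (note $g_L$ kills everything of weight $\geq 2$, so it factors through the top filtration quotient, which is exactly why it computes the edge map). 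Two small caveats. First, convergence of your weight spectral sequence needs $V$ concentrated in positive degrees, so that a weight-$w$, length-$k$ monomial has degree at least $w+k$ and each total degree meets only finitely many weights; this is fine here because the paper works in Quillen's category of reduced dg Lie algebras (and with minimal Quillen models of simply connected spaces), but you should state it rather than gesture at a ``standing connectivity hypothesis.'' Second, your closing remark that ``for the general cofibrant case one argues along the exhaustive cellular filtration instead'' is redundant and slightly confusing: your retract reduction at the start already disposes of the general cofibrant case, so no separate cellular-filtration argument is needed.
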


\begin{proof}
See, e.g., \cite[Proposition 22.8]{FHT-RHT}
\end{proof}

Next, let $L$ be a cofibrant minimal dg Lie algebra model for $X$ and let $\Pi$ denote the abelian graded Lie algebra $\pi_*(\Omega B_\infty)\tensor \QQ$. Consider the degree $-1$ map of graded vector spaces
\begin{align*}
\rho\colon \widetilde{H}_*(X;\QQ) & \to \pi_*(\Omega B_\infty) \tensor \QQ, \\
e & \mapsto \sum_i \langle p_i(\nu),e\rangle \pi_i,
\end{align*}
where $\langle -,- \rangle$ denotes the standard pairing between homology and cohomology (and $\langle p,e \rangle = 0$ unless $e$ and $p$ have the same degree).

\begin{proposition} \label{prop:tau model}
The composite map
$$\tau\colon \rQC(L) \xrightarrow{g_L} sL/[L,L] = \widetilde{H}_*(X;\QQ) \xrightarrow{\rho} \pi_*(\Omega B_\infty) \tensor \QQ$$
is a twisting function that models the map $\nu \colon X\to B_\infty$.
\end{proposition}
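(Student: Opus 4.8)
The plan is to verify separately that $\tau$ is a twisting function and that it models $\nu$. For the first claim, note that since $\Pi$ is abelian with zero differential, the convolution dg Lie algebra $\Hom(\rQC L,\Pi)$ has vanishing bracket, so a Maurer--Cartan element is nothing but a degree $-1$ cycle, i.e.\ a degree $-1$ chain map $\rQC L\to\Pi$. Now $g_L$ is, by construction, the degree $0$ chain map obtained by suspending the twisting function $a\circ\tau_L\colon\rQC L\to L/[L,L]$; since $L$ is minimal its differential is decomposable, the induced differential on $sL/[L,L]=\widetilde H_*(X;\QQ)$ vanishes, and hence $g_L\circ d=0$. As $\rho$ is a linear map between complexes with zero differential, the composite $\tau=\rho\circ g_L$ satisfies $\tau\circ d=\rho\circ(g_L\circ d)=0$ and is therefore a twisting function.

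To prove that $\tau$ models $\nu$, I would first reduce the statement to rational cohomology. Because $\Pi$ is abelian with zero differential and basis $\{\pi_i\}$, with $|\pi_i|=d_i-1$, its realization is a product of rational Eilenberg--MacLane spaces, $(B_\infty)_\QQ=|\MC_\bullet(\Pi)|\simeq\prod_i K(\QQ,d_i)$, the generator $\pi_i$ being dual under the Hurewicz pairing to the fundamental class $\iota_i\in H^{d_i}((B_\infty)_\QQ;\QQ)$. Hence based homotopy classes are detected by the pullbacks of the $\iota_i$. On the algebraic side, $\pi_0\MC_\bullet\Hom(\rQC L,\Pi)$ consists of gauge classes of $-1$-cycles in an abelian dg Lie algebra, so it is $H_{-1}\Hom(\rQC L,\Pi)\cong\bigoplus_i H^{d_i}(X;\QQ)$, a twisting function being sent to the tuple of cohomology classes $[\tau_i]$ of its components $\tau_i$ along the $\pi_i$. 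Under the bijection of Theorem~\ref{thm:mapping space} these two descriptions agree, so the map modeled by $\tau$ pulls $\iota_i$ back to $[\tau_i]$. It then remains to compute $[\tau_i]$: by Proposition~\ref{prop:Q} the map $g_L$ is a quasi-isomorphism and therefore realizes the canonical identification $H^*(\rQC L)\cong\widetilde H^*(X;\QQ)$, while by definition the $\pi_i$-component of $\tau=\rho\circ g_L$ is $\langle p_i(\nu),-\rangle\circ g_L$, the functional $\langle p_i(\nu),-\rangle$ being the one on $\widetilde H_*(X;\QQ)$ represented by $p_i(\nu)$. Passing to cohomology gives $[\tau_i]=p_i(\nu)=\nu^*(p_i)$, which is exactly the tuple to which $\nu_\QQ$ corresponds; hence $\tau$ and $\nu$ model the same map.

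The routine steps are the chain-map verification of the first paragraph and the final unwinding of $\rho$. The step I expect to be the main obstacle is the compatibility invoked in the middle of the second paragraph, namely that the abstract bijection of Theorem~\ref{thm:mapping space}, restricted to path components, really does send the map realized from $\tau$ to the tuple $([\tau_i])_i$ of pulled-back fundamental classes. Making this precise requires tracing the Hurewicz-type duality $\pi_i\leftrightarrow\iota_i$ through the evaluation map $\epsilon(\tau,\xi)=\tau(\xi)$ and its effect on homotopy groups, i.e.\ matching ``the $\pi_i$-component of a twisting function'' with ``the pullback of the $i$-th fundamental class.'' Once this dictionary is established, the identification $[\tau_i]=p_i(\nu)$, and with it the whole statement, is immediate.
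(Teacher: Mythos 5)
Your first paragraph (that $\tau$ is a twisting function) is correct and, modulo packaging, matches the paper: the paper gets this for free by writing $\tau = \psi\circ\tau_L$, where $\psi = \rho\circ a\colon L\to \Pi$ is a morphism of dg Lie algebras ($a\circ d = 0$ because the differential of the minimal model is decomposable, and $\Pi$ is abelian), since a twisting function post-composed with a dg Lie morphism is again a twisting function. For the modeling claim, however, you take a genuinely different route from the paper, and your route is incomplete at exactly the point you flag. The paper's proof is a two-line factorization argument: $\psi$ is a dg Lie algebra model for $\nu$ (the standard fact that, since $\Pi$ is abelian with zero differential, i.e.\ $(B_\infty)_\QQ$ is a product of Eilenberg--MacLane spaces, homotopy classes of dg Lie maps $L\to\Pi$ are just linear maps $s^{-1}\widetilde H_*(X;\QQ)\to\Pi$, matched with $\prod_i H^{d_i}(X;\QQ)$, and $\rho$ is by construction the tuple $(p_i(\nu))_i$), and then composing a dg Lie model of $\nu$ with the universal twisting function $\tau_L$ produces a twisting-function model of $\nu$; finally $\psi\circ\tau_L = \rho\circ g_L = \tau$. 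Your route instead computes $\pi_0$ of both sides of the equivalence of Theorem \ref{thm:mapping space} and matches tuples of cohomology classes directly; that computation ($\pi_0 = H_{-1}\Hom(\rQC L,\Pi)\cong\prod_i H^{d_i}(X;\QQ)$ for abelian $\Pi$, using Proposition \ref{prop:Q}) is fine.

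The genuine issue is that the ``dictionary'' you defer --- that the equivalence of Theorem \ref{thm:mapping space}, on components, sends a twisting function to the tuple of pulled-back fundamental classes --- is not a routine afterthought: in your formulation it carries the entire content of the proposition, and your proposal does not establish it. It is true, and the cleanest way to close it is precisely the naturality that is implicit in the paper's factorization: from the formula $\epsilon(\tau,\xi)=\tau(\xi)$ one sees that the correspondence of Theorem \ref{thm:mapping space} is natural in $\Pi$ with respect to post-composition by dg Lie morphisms, and the universal twisting function $\tau_L$ corresponds to the identity of $X_\QQ$; hence $\tau=\psi\circ\tau_L$ corresponds to the realization of $\psi$, and your dictionary reduces to the single standard statement that $\psi$ models $\nu$ (equivalently, checking the case $\Pi=\QQ\pi_i$ of a single Eilenberg--MacLane factor). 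So your proof can be completed, but as written it stops one step short, and the paper's factorization through $\tau_L$ is exactly the device that makes that step a standard citation rather than an ad hoc unwinding of the Hurewicz pairing through the evaluation map.
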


\begin{proof}
The map
$$\psi\colon L\xrightarrow{a} L/[L,L] = s^{-1} \widetilde{H}_*(X;\QQ) \xrightarrow{\rho} \pi_*(\Omega B_\infty) \tensor \QQ$$
is a dg Lie model for $\nu \colon X\to B_\infty$. Hence, composing with the universal twisting function we get a twisting function that models $\nu$:
$$\rQC(L) \xrightarrow{\tau_L} L \xrightarrow{\psi} \pi_*(\Omega B_\infty) \tensor \QQ.$$
This composite map is the same as the map in the statement of the proposition.
\end{proof}

\begin{theorem}
The classifying space $B\aut_{*,\circ}^*(\nu)$ is rationally homotopy equivalent to the geometric realization of the dg Lie algebra
$$
\Hom\big(\widetilde{H}_*(X;\QQ),\pi_*(\Omega B_\infty)\tensor \QQ\big)\langle 0\rangle \rtimes_{\rho_*} \Der L\langle 1 \rangle.
$$
\end{theorem}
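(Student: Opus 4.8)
The plan is to deduce this from the pointed variant of Theorem~\ref{thm:main} recorded in the preceding Remark, applied with $A=\{*\}$, and then to simplify the resulting model using the quasi-isomorphism $g_L$. First I would identify the dg Lie algebra $\gl_*$ modeling the pointed base monoid. The inclusion of based maps into all maps is modeled by the dg Lie subalgebra inclusion $\Der L\langle 1\rangle\hookrightarrow(\Der L\ltimes_{\ad} sL)\langle 1\rangle$, $\theta\mapsto(\theta,0)$, which is a morphism of dg Lie algebras since the differential of $(\Der L\ltimes_{\ad} sL)\langle 1\rangle$ restricts on this subalgebra to the differential of $\Der L\langle 1\rangle$; that this models $B\aut_{*,\circ}(X)\to B\aut_\circ(X)$ is part of Tanr\'e's theory \cite{Tanre}. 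Taking $\gl_*=\Der L\langle 1\rangle$ and using the reduced Chevalley--Eilenberg chains $\rQC(L)$, the Remark gives that $B\aut_{*,\circ}^*(\nu)$ is rationally homotopy equivalent to the geometric realization of
$$
\Hom^\tau(\rQC L,\Pi)\langle 0\rangle\rtimes_{\tau_*}\Der L\langle 1\rangle,
$$
where $\tau\colon\rQC(L)\to\Pi$ is the twisting function of Proposition~\ref{prop:tau model}.

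Next I would exploit that $\Pi=\pi_*(\Omega B_\infty)\tensor\QQ$ is abelian with zero differential. Since the convolution bracket $[f,g]=\ell_\Pi\circ(f\tensor g)\circ\Delta$ vanishes when $\ell_\Pi=0$, the dg Lie algebra $\Hom(\rQC L,\Pi)$ is abelian and the twisted differential $\partial^\tau=\partial+[\tau,-]$ coincides with $\partial$; thus $\Hom^\tau(\rQC L,\Pi)=\Hom(\rQC L,\Pi)$. The twisting function $\tau$ therefore only survives in the model through the term $\tau_*=\tau\circ(-)$ appearing in the differential of the semi-direct product.

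The heart of the argument is then to transfer this model along $g_L$. By Proposition~\ref{prop:tau model} we have $\tau=\rho\circ g_L$, and by Proposition~\ref{prop:Q} the map $g_L\colon\rQC(L)\to sL/[L,L]=\widetilde{H}_*(X;\QQ)$ is a $\Der L$-equivariant quasi-isomorphism of degree $0$. I would define
$$
\Phi\colon\Hom\big(\widetilde{H}_*(X;\QQ),\Pi\big)\langle 0\rangle\rtimes_{\rho_*}\Der L\langle 1\rangle\longrightarrow\Hom(\rQC L,\Pi)\langle 0\rangle\rtimes_{\tau_*}\Der L\langle 1\rangle
$$
by $\Phi(f,\theta)=(f\circ g_L,\theta)$, i.e.\ $\Phi=\Hom(g_L,\Pi)\rtimes\mathrm{id}$. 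Since both $\Hom$-factors are abelian, checking that $\Phi$ respects brackets reduces to the cross terms, which follow from the identity $\bar\chi(\theta)\circ g_L=g_L\circ\chi(\theta)$ expressing that $g_L$ is a $\Der L$-module map. Compatibility with differentials rests on two points: that $g_L\circ d_{\rQC}=0$ (as $\widetilde{H}_*(X;\QQ)$ carries zero differential and $g_L$ is a chain map), so that $\Hom(g_L,\Pi)$ is a chain map; and that the twists agree, $\tau_*(\theta)=\tau\circ\chi(\theta)=\rho\circ g_L\circ\chi(\theta)=\rho\circ\bar\chi(\theta)\circ g_L=\rho_*(\theta)\circ g_L$, which is exactly where $\tau=\rho\circ g_L$ and the module-map property of $g_L$ are combined.

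Finally, I would argue that $\Phi$ is a quasi-isomorphism. On the sub-dg-Lie-algebras it is $\Hom(g_L,\Pi)\langle 0\rangle$; since $g_L$ is a quasi-isomorphism between complexes of finite type and $\Pi$ is of finite type, $\Hom(g_L,\Pi)$ is a quasi-isomorphism, and passing to connected covers preserves quasi-isomorphisms (the cover alters only the nonpositive part and preserves homology in all nonnegative degrees). As $\Phi$ induces the identity on the quotient $\Der L\langle 1\rangle$, the five lemma applied to the long exact homology sequences of the fibration sequences \eqref{eq:fibration sequence} shows $\Phi$ is a quasi-isomorphism. Quasi-isomorphic connected nilpotent dg Lie algebras have weakly equivalent geometric realizations, so the two realizations agree and the theorem follows. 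The step I expect to require the most care is the verification that $\Phi$ intertwines the two twists, since this is precisely the point at which the identity $\tau=\rho\circ g_L$ and the $\Der L$-equivariance of $g_L$ must be used together.
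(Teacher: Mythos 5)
Your proposal is correct and follows essentially the same route as the paper: invoke the pointed variant of Theorem~\ref{thm:main} with reduced Chevalley--Eilenberg chains and $\gl_*=\Der L\langle 1\rangle$, observe that abelianness of $\Pi$ kills the twist on the $\Hom$-factor, and transfer the model along the $\Der L$-equivariant quasi-isomorphism $g_L$ using $\tau=\rho\circ g_L$, which is exactly the paper's map $g^*\rtimes 1$. You merely spell out details the paper leaves implicit (the bracket and twist compatibilities, and the five-lemma argument that $g^*\rtimes 1$ is a quasi-isomorphism); your appeal to finite type for $\Hom(g_L,\Pi)$ being a quasi-isomorphism is unnecessary over $\QQ$ but harmless.
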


\begin{proof}
By Proposition \ref{prop:tau model}, the map $\tau = \rho\circ g_L$ is a twisting function that models the map $\nu \colon X\to B_\infty$.
By Theorem \ref{thm:main} the dg Lie algebra
$$\Hom^\tau\big(\rQC L,\Pi\big) \langle 0 \rangle \rtimes_{\tau_*} \Der L \langle 1 \rangle$$
is a model for $B\aut_{*,\circ}^*(\nu)$. Since $\Pi$ is abelian, twisting has no effect, i.e., we have $\Hom^\tau\big(\rQC L,\Pi\big) = \Hom\big(\rQC L,\Pi\big)$.
Since $g_L^*(\rho) = \tau$ by definition of $\tau$, we see that the quasi-isomorphism of Proposition \ref{prop:Q} induces a quasi-isomorphism of dg Lie algebras
$$
g^* \rtimes 1 \colon \Hom(sL/[L,L],\Pi)\langle 0 \rangle \rtimes_{\rho_*} \Der L \langle 1 \rangle \to \Hom(\rQC L,\Pi) \langle 0 \rangle \rtimes_{\tau_*} \Der L\langle 1\rangle.
$$
Finally note that $sL/[L,L] = \widetilde{H}_*(X;\QQ)$.
\end{proof}

\begin{remark}
When $\widetilde{H}_*(X;\QQ)$ is finite dimensional, we can rewrite the result in terms of cohomology since
$$\Hom(\widetilde{H}_*(X;\QQ),\pi_*(\Omega B_\infty)) \cong \widetilde{H}^*(X;\QQ) \tensor \pi_*(\Omega B_\infty).$$
The twisting function takes the form
$$\rho = \sum_i p_i(\nu)\tensor \pi_i \in \widetilde{H}^*(X;\QQ) \tensor \pi_*(\Omega B_\infty)$$
in this case.
\end{remark}

\begin{remark}
Again there are easily proved variants of this result. If $A\subseteq X$ is a subspace containing the base-point such that the inclusion of $A$ into $X$ is a cofibration, then we may consider the submonoid $\aut_{A,\circ}^*(\nu) \subseteq \aut_{*,\circ}^*(\nu)$ where the homotopy automorphism of the base restricts to the identity map on $A$. If $\gl \to \Der L\langle 1\rangle$ is a morphism of dg Lie algebras that models the map $B\aut_A(X) \to B\aut_*(X)$, then
$$\Hom\big(\widetilde{H}_*(X;\QQ),\pi_*(\Omega B_\infty)\tensor \QQ\big)\langle 0\rangle  \rtimes_{\rho_*} \gl$$
is a dg Lie algebra model for the space $B\aut_{A,\circ}^*(\nu)$.
In \cite{BM} we apply this result in the case $(X,A) = (M,\partial M)$ to construct a dg Lie algebra model for the block diffeomorphism group of a simply connected smooth $n$-manifold $M$ with boundary $\partial M = S^{n-1}$ ($n\geq 5$).
\end{remark}

\subsection*{Acknowledgments}
The author thanks Wojciech Chach\'olski for discussions about classification of fibrations and Ib Madsen for numerous discussions. This work was supported by the Swedish Research Council through grant no.~2015-03991.


\begin{thebibliography}{99}
\bibitem{Berglund}
A. Berglund,
{\it Rational homotopy theory of mapping spaces via Lie theory for $L_\infty$-algebras}.
Homology Homotopy Appl. {\bf 17} (2015), no. 2, 343--369. 

\bibitem{BM}
A. Berglund, I. Madsen,
{\it Rational homotopy theory of automorphisms of manifolds}.
Available on arXiv.

\bibitem{BHMP}
P. Booth, P. Heath, C. Morgan, R. Piccinini,
{\it $H$-spaces of self-equivalences of fibrations and bundles}.
Proc. London Math. Soc. (3) {\bf 49} (1984), no. 1, 111--127.

\bibitem{Buijs-Smith} U. Buijs, S. Smith,
{\it Rational homotopy type of the classifying space for fibrewise self-equivalences}.
Proc. Amer. Math. Soc. {\bf 141} (2013), no. 6, 2153--2167. 

\bibitem{FHT-RHT}
Y. F\'elix, S. Halperin, J-C. Thomas,
Rational Homotopy Theory.
Graduate texts in Mathematics 205, Springer, 2001.

\bibitem{FOT}
Y. F\'elix, J. Oprea, D. Tanr\'e,
Algebraic models in geometry.
Oxford Graduate Texts in Mathematics, 17. Oxford University Press, 2008.

\bibitem{HMR}
P. Hilton, G. Mislin, J. Roitberg,
{\it Localization of nilpotent groups and spaces}.
North-Holland Mathematics Studies, No. 15. Notas de Matem\'atica, No. 55.

\bibitem{Hinich}
V. Hinich,
{\it DG coalgebras as formal stacks}.
J. Pure Appl. Algebra {\bf 162} (2001), no. 2-3, 209--250. 

\bibitem{MM}
I. Madsen, J.R. Milgram,
The classifying spaces for surgery and cobordism of manifolds.
Annals of Mathematics Studies, 92. Princeton University Press, 1979.

\bibitem{May}
J.P. May,
Classifying spaces and fibrations.
Mem. Amer. Math. Soc. 1 (1975), 1, no. 155.

\bibitem{Quillen}
D. Quillen,
{\it Rational homotopy theory}.
Ann. of Math. (2) {\bf 90} (1969) 205--295.

\bibitem{SS}
M. Schlessinger, J. Stasheff,
{\it Deformation theory and rational homotopy type}.
arXiv:1211.1647

\bibitem{Tanre}
D. Tanr\'e,
{\it Homotopie rationnelle: mod\`eles de Chen, Quillen, Sullivan}.
Lecture Notes in Mathematics, {\bf 1025}. Springer-Verlag, Berlin, 1983.

\end{thebibliography}
\end{document}